\newcommand{\n}{\mathbb{N}}
\newcommand{\z}{\mathbb{Z}}
\newcommand{\braces}[1]{\lbrace #1 \rbrace}
\newcommand{\bracks}[1]{\lbrack #1 \rbrack}
\newcommand{\bigp}[1]{\left( #1 \right)}
\newcommand{\bigbrack}[1]{\left\lbrack #1 \right\rbrack}
\newcommand{\bigbrace}[1]{\left\lbrace #1 \right\rbrace}
\newcommand{\norm}[1]{\|#1\|}
\newcommand{\bnorm}[1]{\Bigg\lvert\Bigg\lvert#1\Bigg\rvert\Bigg\rvert}
\newcommand{\ceil}[1]{\lceil #1 \rceil}
\newcommand{\floor}[1]{\lfloor #1 \rfloor}
\newcommand{\bfloor}[1]{\left\lfloor #1 \right\rfloor}
\newcommand{\abs}[1]{|#1|}
\newcommand{\babs}[1]{\Bigg\lvert#1\Bigg\rvert}
\newcommand{\tn}[1]{\textnormal{#1}}
\newcommand{\quotes}[1]{``#1"}
\newcommand{\dist}{\tn{dist}}
\newcommand{\tree}[2][\n]{\bracks{\mathbb{#1}}^{\leq #2}}
\newcommand{\treeleaves}[2][\n]{\bracks{\mathbb{#1}}^{#2}}
\newcommand\blfootnote[1]{%
  \begingroup
  \renewcommand\thefootnote{}\footnote{#1}%
  \addtocounter{footnote}{-1}%
  \endgroup
}
\theoremstyle{remark}
\newtheoremstyle{break}
  {}
  {}
  {}
  {}
  {\bfseries}
  {.}
  {\newline}
  {}
\theoremstyle{break}
\newtheorem{theorem}{Theorem}[section]
\newtheorem{lemma}[theorem]{Lemma}
\newtheorem{definition}{Definition}[subsection]
\newtheorem*{claim}{Claim}
\begin{document}

\title{On Matou\v{s}ek-like Embedding Obstructions of Countably Branching Graphs }
\author{Ryan Malthaner}

\keywords{Rolewicz's property ($\beta$), coarse embedding, non-embeddability, compression rate, trees, diamonds, umbels, metric spaces, distortion, bi-Lipschitz embeddings }
\subjclass{05C63, 46B06, 46B20, 46B85, 51F30. }
\blfootnote{This paper created as part of a Ph.D. thesis in mathematics at Texas A\&M Univerity under the supervision of Dr. Florent Baudier.  R. Malthaner was partially supported by National Science Foundation Grant Number DMS-2055604.}

\begin{abstract}
    In this paper we present new proofs of the non-embeddability of countably branching trees into Banach spaces satisfying property $(\beta_p)$ and of countably branching diamonds into Banach spaces which are $p$-AMUC for $p > 1$.  These proofs are entirely metric in nature and are inspired by previous work of Ji{\v{r}}{\'i} Matou{\v{s}}ek.  In addition, using this metric method, we succeed in extending these results to metric spaces satisfying certain curvature-like inequalities.  Finally, we extend an embedding result of Tessera to give lower bounds on the compression for a class of Lipschitz embeddings of the countably branching trees into Banach spaces containing $\ell_p$-asymptotic models for $p \geq 1$. 
\end{abstract}
\maketitle

\tableofcontents

\section{Introduction}
    In the world of Banach spaces, it is meaningful to pursue methods of differentiating the zoo of spaces available at a researcher's fingertips.  Given a Banach space $X$ satisfying property $P$, one reasonable direction to pursue in that regard is to look at which spaces or families of spaces do not embed \quotes{well} into $X$ as a result of $P$.  Some of the simplest examples to consider are  families of graphs $\braces{G_n}_{n = 1}^\infty$ equipped with the shortest path metric where the diameter of the graphs (largest distance between two points) gets larger as $n$ increases.  While normally we might consider linear maps, it is natural in this metric setting to instead consider families of bi-Lipschitz embeddings where if $(G_n, d_{G_n})$ is a connected graph with metric $d_{G_n}$ on the vertices and $f_n: G_n \to X$, we have some  $K \geq 1$ and $\lambda > 0$ such that
    \[
         \lambda d_{G_n}(x, y) \leq \norm{f_n(x) - f_n(y)} \leq  K\lambda d_{G_n}(x, y)
    \]
    for every $x, y \in G_n$.  (We can assume $\lambda = 1$ by rescaling in the Banach space, but in a metric space, we may not necessarily be able to do that.)  In this case, the quantitative measure of how \quotes{well} $G_n$ embeds into $X$ under this map $f_n$ is given by the smallest $K$ which satisfies the above inequality, called the \textit{distortion} of $f_n$, and is denoted by $\dist(f_n)$.  This, however, only gives us information for one particular embedding.  If we wish to show that these graphs do not embed well as a family, we must consider $\sup_nc_{X}(G_n) = \sup_n\inf\braces{\dist(f) \  | \ f:G_n \to X \tn{ and } f \tn{ bi-Lipschitz}}$.  If this quantity is bounded, then we say that $\braces{G_n}_{n = 1}^\infty$ embeds \textit{equi-bi-Lipschitzly}, meaning there exist maps $f_n: G_n \to X$ such that $\dist(f_n) \leq \sup_nc_{X}(G_n)$ for all $n \in \n$.
    
    Some results in this vein come from the work of Bourgain \cite{bourgain_metrical_1986} and Matoušek \cite{matousek} who each proved that the complete binary trees $(B_h)_{h = 1}^\infty$ do not embed equi-bi-Lipschitzly into any Banach space whose norm satisfies the $p$-uniform convexity inequality. In essence, their result states that if the unit ball of a Banach space is sufficiently round, then any bi-Lipschitz map $f: B_h \to X$ must incur a penalty of at least $O(\log_2(h)^{1/p})$.  Much later, in \cite{Johnson2009DIAMONDGA}, it was shown that the binary diamond graphs $\braces{D_n^2}_{n = 1}^\infty$ and Laakso graphs $\braces{L_n^2}_{n = 1}^\infty$ also do not embed equi-bi-Lipschitzly into $p$-uniformly convex Banach spaces.  
    
    Each of these proofs, however, differed greatly from the others.  Bourgain's proof relied heavily on the linear structure of $X$, utilizing Pisier's martingale inequality to accomplish his proof, while the Johnson and Schechtman argument utilized a self-improvement argument and the recursive nature of the diamond and Laakso graphs.  (This self-improvement argument was also later utitilized in a new proof of the non-embeddability of the binary trees by Kloeckner in \cite{kloeckner_tree_embedding}.)  Finally, Matoušek's argument was based on coloring and partitioning, only using the linear structure of $X$ in a single lemma.  This would seem to indicate that the linear structure is unnecessary, and indeed, this was shown in \cite{mendel_naor_matousek_proof}.  In that spirit, the primary contribution of this paper is showing that these techniques can be extended to countably branching trees, and more interestingly, countably branching diamonds.

    In the first part of this paper, we will show that Matousek's arguments can be adapted and generalized to study the non-embeddability of countably branching trees into metric spaces that satisfy the infrasup $p$-umbel inequality. 

    In the second part of the paper, we will extend Matousek's technique to diamonds by first showing that Banach spaces satisfying the asymptotic midpoint uniform convexity property do not contain countably branching diamonds.  From here, we will show a diamond analogue for the infrasup $p$-umbel inequality.  We will then extend Matousek's arguments to show that the countably branching diamonds do not embed equi-bi-Lipschitzly into a metric space satisfying this inequality for $p \geq 1$.

    In the last third, we will also provide an embedding of countably branching trees into certain Banach spaces, weaker than bi-Lipschitz, generalizing a result of \cite{baudier2021umbel} and \cite{tessera}.

\section{Countably Branching Trees}
In this section, we will prove that for any metric space $(X, d_X)$ satisfying the infrasup $p$-umbel inequality for $p \geq 1$, the countably branching trees of finite but arbitrary height do not equi-bi-Lipschitzly embed into $X$.  To begin, we present these definitions and the extension from the linear to the metric setting.
    \subsection{Definitions and Lemmas}
    \subsubsection{Rolewicz's Property $(\beta_p)$}
    Originally, in Matoušek's proof, the obstruction to the embedding came from the $p$-uniform convexity of the Banach space. There are several different asymptotic inequalities that one could consider, however, in the case of countably branching trees, the $(\beta_p)$ property seems to be the most natural.  As a reminder, $p$-uniform convexity of a Banach space for $p \geq 2$ and some $c > 0$ is usually defined as for every $t > 0$, there exists $\delta(t) \geq \frac{t^p}{c}$ such that for every $x, y \in B_X$, the unit ball of a Banach space $X$, with $\norm{x - y} \geq t$, we have $\norm{\frac{x + y}{2}} \leq 1 - \delta(t)$.  This version of uniform convexity heavily focuses on the midpoint between $x$ and $y$, an inconvenient formulation for our purposes, as would its asymptotic generalization.  An equivalent version is given in \cite{baudier2021umbel} in Lemma 10 which they called a \quotes{tripod} variant of uniform convexity.  Here, we will use the convention $p$-tripod uniform convexity.

    \begin{definition}[$p$-Tripod Uniform Convexity]
    \label{def:p_tripod_convexity}
        A Banach space $X$ is $p$-tripod uniformly convex for some $p \geq 2$ and $c > 0$ if for every $t > 0$ and $x_1, x_2, z \in B_X$ with $\norm{x_1 - x_2} \geq t$, there exists $i_0 \in \braces{1, 2}$ such that $\norm{z - x_{i_0}} \leq 2(1 - \frac{t^p}{c})$.
    \end{definition}

    This version very naturally extends to property $(\beta_p)$ given below.

    \begin{definition}[Property $(\beta_p)$ or Rolewicz's property]
        \label{def:beta_p}
        A Banach space $(X, \norm{\cdot})$ has Rolewicz's property $(\beta)$ if for all $t > 0$ there exists $\beta(t) > 0$ such that for all $z \in B_X$ and $\braces{x_n}_{n \in \n} \subset B_X$ with $\inf_{i \neq j}||x_i - x_j||_X \geq t$, there exists $i_0 \in \n$ such that 
        \[
            \bnorm{\frac{z - x_{i_0}}{2}}_X \leq 1 - \beta(t).
        \]
        
        Moreover, $X$ is said to have property $(\beta_p)$ for $p > 1$ and constant $C_\beta > 0$ if $\beta(t) \geq \frac{t^p}{C_\beta}$. \cite{kutzarova_beta}
    \end{definition}

    This is the first of the asymptotic inequalities that we will be utilizing in this paper.  Its various properties and characterizations are summarized in \cite{dilworth_amuc}.  For our purposes, however, we will be using this as a starting point for moving to the metric inequality, the infrasup $p$-umbel inequality.

    \begin{definition}[Infrasup $p$-Umbel Inequality]
    \label{def:infrasup_p_umbel_inequality}
    An infinite metric space $(X, d)$ satisfies the infrasup $p$-umbel inequality for some $p > 0$ if there exists $C_U > 0$ such that for any infinite collection of points $w, z, x_1, x_2, x_3, ...$, we have
    \[
        \frac{1}{2^p}\inf_{n \in \n}d(w, x_n)^p + \frac{1}{C_U^p}\inf_{i \neq j\in \n}d(x_i, x_j)^p \leq \max\braces{d(w, z)^p, \sup_{n \in \n}d(x_n, z)^p}.
    \]
    \end{definition}

    Importantly, we show that this inequality is satisfied by any Banach space with Rolewicz's property, an idea implicitly found in \cite{baudier2021umbel}.

    \begin{lemma}[Rolewicz implies Umbel]
    \label{lem:rolewicz_implies_umbel}
        Let $X$ be a Banach space satisfying $(\beta_p)$ for $p > 1$, then there exists $C_U > 0$ such that $X$ satisfies the infrasup $p$-umbel inequality.
    \end{lemma}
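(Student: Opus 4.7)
The plan is to reduce the umbel inequality to a single application of $(\beta_p)$ after a translation and rescaling, and then upgrade the resulting linear convexity estimate to the desired $p$-th power inequality using an elementary convexity lemma.

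First I would dispose of the trivial cases. If $R := \max\{\|w-z\|, \sup_n\|x_n - z\|\} = 0$ then both sides of the target inequality vanish. Otherwise, by translation invariance of the norm I may assume $z = 0$, and set $\tilde{w} := w/R$, $\tilde{x}_n := x_n/R$, so that $\tilde{w}, \tilde{x}_n \in B_X$. Writing $t := \inf_{i \neq j}\|\tilde{x}_i - \tilde{x}_j\|$, if $t = 0$ the umbel inequality reduces to $\tfrac{1}{2^p}\inf_n \|w - x_n\|^p \leq R^p$, which follows from the triangle inequality $\|w - x_n\| \leq 2R$.

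So assume $t > 0$. Property $(\beta_p)$ applied to $\tilde{w}$ and the sequence $(\tilde{x}_n)$ in $B_X$ furnishes an index $i_0$ with
\[
    \Bigl\|\frac{\tilde{w} - \tilde{x}_{i_0}}{2}\Bigr\| \leq 1 - \beta(t) \leq 1 - \frac{t^p}{C_\beta}.
\]
Unscaling and rewriting in terms of the original points yields
\[
    \frac{\|w - x_{i_0}\|}{2R} \leq 1 - \frac{1}{C_\beta}\cdot\frac{\inf_{i\neq j}\|x_i - x_j\|^p}{R^p}.
\]
The key remaining step is to raise to the $p$-th power and pass from this linear estimate to an additive one. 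For $p \geq 1$ and $a \in [0,1]$ an easy one-variable calculus check shows $(1-a)^p \leq 1 - a$ (the function $a \mapsto 1 - a - (1-a)^p$ vanishes at the endpoints and has non-positive derivative at $0$). Applied to $a = t^p/C_\beta$ (which lies in $[0,1]$ after, if necessary, enlarging $C_\beta$ to ensure $C_\beta \geq 2^p$, using $t \leq 2$) this gives
\[
    \frac{\|w - x_{i_0}\|^p}{2^p R^p} \leq \Bigl(1 - \frac{t^p}{C_\beta}\Bigr)^p \leq 1 - \frac{\inf_{i \neq j}\|x_i - x_j\|^p}{C_\beta R^p}.
\]
Multiplying through by $R^p$ and rearranging delivers the infrasup $p$-umbel inequality with constant $C_U := C_\beta^{1/p}$.

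The only real friction I expect is the convexity step that converts $(1 - t^p/C_\beta)$ into a bound of the form $(1 - t^p/C_U^p)^{1/p}$; one has to choose the right elementary inequality and confirm $C_\beta \geq 2^p$ (which is harmless since $\beta(t)\leq 1$ always, so the modulus estimate can be re-normalized). Beyond that, the argument is a direct translation/dilation packaged around a single invocation of $(\beta_p)$, with the usual care taken so that the supremum over $n$ of $\|x_n - z\|^p$ in the upper bound absorbs everything correctly.
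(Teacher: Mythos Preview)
Your proposal is correct and follows essentially the same route as the paper: translate to $z=0$, rescale into $B_X$, dispose of the degenerate case, apply $(\beta_p)$ once, and convert the linear estimate to a $p$-th power one via the elementary inequality $a^p\le a$ for $a\in[0,1]$, arriving at $C_U=C_\beta^{1/p}$. The only cosmetic difference is the order of operations: the paper applies $a^p\le a$ to $a=\|w-x_n\|/2\in[0,1]$ \emph{before} invoking $(\beta_p)$, which avoids your extra step of enlarging $C_\beta$ to ensure $t^p/C_\beta\le 1$; also, your parenthetical justification of $(1-a)^p\le 1-a$ is slightly off (the derivative at $0$ is $p-1>0$, not non-positive), though the inequality itself is of course just $b^p\le b$ with $b=1-a$.
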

    \begin{proof}
        Let $w, z, x_1, x_2, ... \in X$.  Without loss of generality, by translation of the norm, we may assume that $z = 0$.  Suppose now that $\sup_n\norm{x_n} = \infty$.  If this is the case, then the infrasup $p$-umbel inequality holds trivially.  As a result, we assume that $\sup_n\norm{x_n} < \infty$.

        By the invariance of the infrasup $p$-umbel inequality under scalings of the metric, we may assume that $w, x_1, x_2, ... \in B_X$, so we need to show that
        \[
            \frac{1}{2^p}\inf_{n \in \n}\norm{w - x_n}^p + \frac{1}{C^p_U}\inf_{i \neq j\in \n}\norm{x_i - x_j}^p \leq 1.
        \]

        There are now two cases to consider, $\inf_{i \neq j\in \n}\norm{x_i - x_j}^p > 0$ and $\inf_{i \neq j\in \n}\norm{x_i- x_j}^p = 0$.  In the case where it is equal to zero, then we can observe that since $w, x_n \in B_X$, we trivially satisfy the inequality and have
        \[
            \inf_{n \in \n}\frac{\norm{w - x_n}^p}{2^p} \leq \frac{2^p}{2^p} = 1.
        \]

        In the case where $\inf_{i \neq j\in \n}\norm{x_i - x_j}^p > 0$, in the definition of the $(\beta_p)$ property, we can let $t = \inf_{i \neq j\in \n}\norm{x_i - x_j}$.  In addition, we observe that since $\inf_{n \in \n}\norm{w - x_n} \leq 2$ and $p > 1$ we have by utilizing the $(\beta_p)$ property
        \[
            \inf_{n \in \n}\frac{\norm{w - x_n}^p}{2^p} \leq \inf_{n \in \n}\frac{\norm{w - x_n}}{2} \leq 1 - \frac{1}{C_\beta}\inf_{i \neq j\in \n}\norm{x_i - x_j}^p.
        \]

        If we now let $C_U = C_\beta^{1/p}$, we see that $X$ satisfies the infrasup $p$-umbel inequality for this choice of $C_U$.
    \end{proof}

    \subsubsection{Tree Definitions}
    Here, we introduce the relevant tree definitions we will be using throughout this paper.  We denote the complete, countably branching tree of height $h$ by $T_h^\omega$ where $\omega$ is the first countable ordinal.  Rather than working with the graph theoretic version of the tree, we will instead use the standard encoding of the tree given by $(\tree{h}, d_T)$ where $\tree{h} = \braces{A \subset \n \ | \ 0 \leq |A| \leq h}$.  By convention, we will write non-root elements of $\tree{h}$ as $\bar{n} = (n_1, n_2, ..., n_l)$ for $1 \leq l \leq h$ with $n_1 < n_2 < ... < n_l$, and the root, $r$, as the empty set.  In order to reference individual pieces of a given element $\bar{n} = (n_1, ..., n_l)$ of $\tree{h}$, we will use bracket notation, meaning $\bar{n}[i:j] = (n_i, n_{i + 1}, ..., n_{j - 1}, n_j)$ for $1 \leq i \leq j \leq h$.  In the case that $i = j$, we write $\bar{n}[i] = (n_i)$, and in the case that $j = 0$, this is the root $r$.   We may also, by an abuse of notation, use concatenation to refer to pieces of a given element, i.e., if $\bar{u} = (u_1, ..., u_s)$, then we could construct $\bar{m} = (\bar{u}, m_1, ..., m_l)$ where $s + l \leq h$.  We can equip the tree with a natural partial order by defining $\bar{m} < \bar{n}$ if $\bar{n} = (\bar{m}, n_1, ..., n_l)$ for some $l \geq 1$. Finally, the metric on $\tree{h}$, given by $d_T$, is defined in terms of the first common ancestor of $\bar{m}$ and $\bar{n}$.  If $\bar{u} = (u_1, ..., u_s)$ is the first common ancestor of $\bar{m} = (\bar{u}, m_1, ..., m_k)$ and $\bar{n} = (\bar{u}, n_1, ..., n_l)$, then $d_T(\bar{m}, \bar{n}) = l + k$.

    \subsubsection{Umbel Lemma}
    Matoušek's original argument showing that the complete binary trees $\braces{B_h}_{h = 1}^\infty$ do not embed into $p$-uniformly convex Banach spaces required three main lemmas: a fork lemma, a coloring lemma, and a path embedding lemma.  The fork lemma uses the convexity of the target space to prove that fork-like structures in the target space must have fork tips that are close together.  The coloring lemma provides a way for refining a coloring on pairs of vertices in a tree to something which can be more effectively used to extract a subtree with particular properties.  Finally, the path lemma provides the means by which a contradiction and bound on the distortion of an arbitrary bi-Lipschitz map from the trees into our target space. In essence, it shows that embedded paths of sufficient length must have at least one subpath which is embedded arbitrarily well.  Combining these three lemmas, Matoušek was able to prove the result in the binary trees case. In the extension of this theorem from the local theory to the asymptotic metric theory, we will derive three similar results for the countably branching trees, then connect them together to get the final result.

    To this end, let us define, in the spirit of Matoušek, a vertical $\delta$-umbel.
    
    \begin{definition}[Vertical $\delta$-umbel]
    \label{def:vert_delta_umbel}
    Let $(X, d)$ be an infinite metric space.  A vertical $\delta$-umbel is a subspace $U = \braces{w, z, x_1, x_2, ...}$ of $X$ such that 
    \begin{align*}
        \theta \leq d(w, z) &\leq (1 + \delta)\theta \\
        \theta \leq d(z, x_i) &\leq  (1 + \delta) \theta \\
        2\theta \leq d(w, x_i) &\leq 2(1 + \delta)\theta
    \end{align*}
    for some $\theta > 0$, or, in other words, for every $i \in \n$ we have that $\braces{w, z, x_i}$ is $(1 + \delta)$-isomorphic (in the sense of distortion) to the metric space $P_2 = \braces{0, 1, 2}$ with the absolute value metric on it.  (See \cref{fig:umbel})
    \end{definition}
    
    \begin{figure}[htb]%
    \centering
    \includegraphics[width=.4\linewidth]{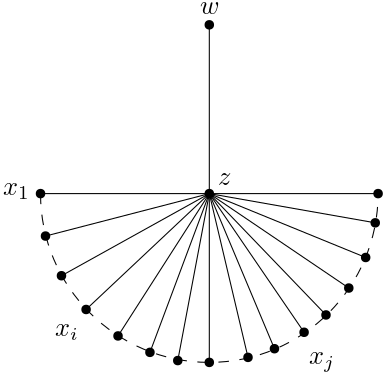}%
    \caption{Umbel Graph}%
    \label{fig:umbel}%
    \end{figure}

    From here, we can now present the vertical $\delta$-umbel lemma, demonstrating that in a metric space satisfying the infrasup $p$-umbel inequality, any vertical $\delta$-umbel must have at least two tips which are close together.

    \begin{lemma}[Infrasup Umbel Inequality Lemma]
    \label{lem:infrasup_umbel_inequality_lemma}
    Let $(X, d)$ be a metric space satisfying the infrasup $p$-umbel inequality for some $C_U \geq 1$ and $p \geq 1$ and let $U = \braces{w, z, x_1, x_2, ...}$ be a vertical $\delta$-umbel in $X$ with constant $\theta > 0$ and $\delta \in (0, 1)$.  Then,
    \[
    \inf_{i \neq j \in \n}d(x_i, x_j) \leq 6C_U\theta \delta^{1/p}.
    \]
    \end{lemma}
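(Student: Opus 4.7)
The plan is to apply the infrasup $p$-umbel inequality directly to the configuration $(w,z,x_1,x_2,\ldots)$ and then exploit the near-equality of the umbel distances to extract the required decay in $\delta$.

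First, I would write down the infrasup $p$-umbel inequality for the points $w, z, x_1, x_2, \ldots$ from the hypothesis:
\[
\frac{1}{2^p}\inf_{n \in \n} d(w, x_n)^p + \frac{1}{C_U^p}\inf_{i \neq j} d(x_i, x_j)^p \leq \max\bigbrace{d(w,z)^p,\, \sup_{n \in \n} d(x_n, z)^p}.
\]
On the left side, the umbel assumption gives $d(w, x_n) \geq 2\theta$ for every $n$, so $\frac{1}{2^p}\inf_n d(w,x_n)^p \geq \theta^p$. On the right side, both $d(w,z)$ and $d(x_n, z)$ are at most $(1+\delta)\theta$, so the maximum is at most $(1+\delta)^p \theta^p$. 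Substituting these bounds and rearranging,
\[
\frac{1}{C_U^p}\inf_{i \neq j} d(x_i, x_j)^p \leq \big((1+\delta)^p - 1\big)\theta^p.
\]

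The remaining step is to convert $((1+\delta)^p - 1)^{1/p}$ into a clean multiple of $\delta^{1/p}$. The only mild obstacle here is handling $p$ uniformly for all $p \geq 1$. I would use the mean value theorem, writing $(1+\delta)^p - 1 = p(1+c)^{p-1}\delta$ for some $c \in (0,\delta) \subset (0,1)$, which gives $(1+\delta)^p - 1 \leq p \cdot 2^{p-1}\delta$. Taking $p$-th roots, $((1+\delta)^p - 1)^{1/p} \leq (p \cdot 2^{p-1})^{1/p}\delta^{1/p}$, and a direct calculus check shows that $(p \cdot 2^{p-1})^{1/p}$ is bounded above by $6$ for every $p \geq 1$ (in fact it is maximized near $p = 2e$ at a value below $2.5$). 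Thus $((1+\delta)^p - 1)^{1/p} \leq 6\delta^{1/p}$.

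Combining the two bounds and taking $p$-th roots of the displayed inequality yields
\[
\inf_{i \neq j} d(x_i, x_j) \leq C_U \theta \cdot 6\delta^{1/p} = 6C_U \theta \delta^{1/p},
\]
which is exactly the conclusion. The heart of the argument is really the single application of the infrasup $p$-umbel inequality; the only genuine \emph{work} is the elementary scalar inequality $((1+\delta)^p - 1)^{1/p} \leq 6\delta^{1/p}$, and I expect this to be the only place where one has to be slightly careful (though the constant $6$ is clearly not optimal; a convexity argument even gives $2$).
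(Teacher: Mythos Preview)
Your proposal is correct and follows essentially the same approach as the paper: apply the infrasup $p$-umbel inequality to $(w,z,x_1,x_2,\ldots)$, use the umbel bounds to reduce to $\inf_{i\neq j} d(x_i,x_j) \leq C_U\theta\big((1+\delta)^p-1\big)^{1/p}$, and then bound the scalar factor by $6\delta^{1/p}$. The only difference is cosmetic: the paper handles $((1+\delta)^p-1)^{1/p}$ via Taylor's theorem with Lagrange remainder and splits into two terms, whereas your mean value theorem argument $(1+\delta)^p-1 \leq p\,2^{p-1}\delta$ is a bit more direct and yields the same constant.
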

    \begin{proof}
    Since $U$ is a vertical $\delta$-umbel with $\theta > 0$, we have that 
    \begin{align*}
    \theta &\leq d(w, z) \leq (1 + \delta)\theta \\
    \theta &\leq d(z, x_i) \leq (1 + \delta)\theta \\
    2\theta &\leq d(w, x_i) \leq 2(1 + \delta)\theta.
    \end{align*}
    
    We will now use these inequalities in three of the four pieces of the infrasup $p$-umbel inequality to bound the distance between the branches of the umbel.  Observe that
    \begin{align*}
        \theta^p &\leq \frac{1}{2^p}\inf_{n \in \n}d(w, x_i)^p \\
        d(z, w)^p &\leq (1 + \delta)^p\theta^p \\
        \sup_{n \in \n}d(x_i, z)^p &\leq (1 + \delta)^p\theta^p
    \end{align*}
    
    which, by taking the $p$-th root of both sides, gives us that
    \[
    \frac{1}{C_U}\inf_{i \neq j\in \n}d(x_i, x_j) \leq \bracks{(1 + \delta)^p - 1}^{1/p}\theta.
    \]
    
    By applying Taylor's Theorem to $(1 + \delta)^p$ and utilizing $\delta < 1$, we see for some $\xi \in (0, \delta)$
    \begin{align*}
        \bracks{(1 + \delta)^p - 1}^{1/p} &= \bigbrack{p\delta + \frac{p(p - 1)(1 + \xi)^{p - 2}}{2}\delta^2}^{1/p} \\
        &\leq \bigbrack{p\delta + \frac{p(p - 1)2^{p - 2}}{2}\delta^2}^{1/p}\\
        &\leq p^{1/p}\delta^{1/p} + \bigp{\frac{p(p - 1)}{2}}^{1/p}2^{\frac{p - 2}{p}}\delta^{2/p} \\
        &\leq 2\delta^{1/p} + 4\delta^{2/p} \leq 6\delta^{1/p}
    \end{align*}

    Putting everything together and moving $C_U$ to the other side, gives us the result.
    \end{proof}

    \subsubsection{Coloring Lemma}
    From here, we can move on to the coloring lemma. To this end, for a tree with encoding $\bracks{\n}^{\leq h}$, let $\tn{VP}(\bracks{\n}^{\leq h})$ be the set of vertical vertex pairs $(\bar{m}, \bar{n})$ such that $\bar{m} < \bar{n}$. By coloring this set with a finite number of colors, we hope to extract a subtree whose coloring is \textit{horizontally monochromatic }, i.e., $(\bar{m}, \bar{n})$ has the same color as $(\bar{j}, \bar{k})$ if $d(\bar{m}, r) = d(\bar{j}, r)$ and $d(\bar{n}, r) = d(\bar{k}, r)$.  Guaranteeing the existence of such a subcoloring is the content of the next lemma.

    \begin{lemma}[Tree Vertex Pair Coloring Lemma]
    \label{lem:tree_coloring_lemma}
    Let $h, c \in \n$.  Let $\tree{h}$ be the complete countably branching tree of height $h$ with root $r$.  Color each of the elements of $\tn{VP}(\tree{h})$ by one of $c$ colors.  Then there exists an infinite $\mathbb{M} \subset \n$ such that the coloring on $\tn{VP}(\tree[M]{h})$ is horizontally monochromatic.
    \end{lemma}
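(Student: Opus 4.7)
The plan is to iterate the infinite Ramsey theorem over the $\binom{h+1}{2}$ level pairs of the tree. The key observation is that for fixed levels $0 \le i < j \le h$, a vertical pair $(\bar m, \bar n)$ with $|\bar m| = i$ and $|\bar n| = j$ is completely determined by the deeper vertex $\bar n$, because $\bar m < \bar n$ forces $\bar m = \bar n[1{:}i]$. Consequently, when restricted to pairs with these fixed levels and endpoints in $\tree[M]{h}$ for some infinite $\mathbb{M} \subset \n$, the given $c$-coloring on $\tn{VP}(\tree{h})$ pulls back to a $c$-coloring of $[\mathbb{M}]^{j}$ via $\bar n \mapsto \braces{n_1, \ldots, n_j}$.

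From here, the proof is a standard cascade. I would enumerate the level pairs $(i, j)$ with $0 \le i < j \le h$ as $(i_1, j_1), \ldots, (i_N, j_N)$, where $N = \binom{h+1}{2}$, and set $\mathbb{M}_0 = \n$. At stage $k$, I would apply the infinite Ramsey theorem to the induced $c$-coloring of $[\mathbb{M}_{k-1}]^{j_k}$ to extract an infinite $\mathbb{M}_k \subseteq \mathbb{M}_{k-1}$ on which every $j_k$-subset receives the same color. Finally, set $\mathbb{M} = \mathbb{M}_N$. Because each $\mathbb{M}_\ell$ is monochromatic for level pair $(i_\ell, j_\ell)$ and $[\mathbb{M}]^{j_\ell} \subseteq [\mathbb{M}_\ell]^{j_\ell}$ inherits this property, $\mathbb{M}$ is simultaneously monochromatic at every level pair, which is exactly horizontal monochromaticity for $\tn{VP}(\tree[M]{h})$.

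There is no genuine obstacle; the content is organizational. The only point requiring a little care is the root case $i_k = 0$, where $\bar m = r$ is not indexed by an element of $\n$, but the pair $(r, \bar n)$ is still uniquely encoded by the $j_k$-subset $\braces{n_1, \ldots, n_{j_k}}$, so the same Ramsey extraction applies. It is also worth noting that the argument is uniform in $c$ and $h$ provided both are finite, which is precisely the regime where the lemma will later be combined with the umbel inequality lemma.
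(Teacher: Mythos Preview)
Your argument is correct. The approach is closely related to the paper's but organized differently: the paper encodes, for each leaf $\bar n \in [\n]^h$, the full vector of colors $\bigl(\chi(\bar n[1{:}i],\bar n[1{:}j])\bigr)_{0\le i<j\le h}$ as a single color in $\{1,\dots,c\}^{\binom{h+1}{2}}$, and then applies the infinite Ramsey theorem \emph{once} to $[\n]^h$ with this product coloring. You instead apply Ramsey $\binom{h+1}{2}$ times, once per level pair $(i,j)$, to the induced $c$-coloring of $[\mathbb{M}_{k-1}]^{j}$, cascading through a nested sequence of infinite sets. Both routes rest on the same key observation---that a vertical pair at levels $(i,j)$ is determined by the $j$-subset naming the deeper vertex---and the product-coloring-once versus iterate-many-times dichotomy is a standard equivalence in Ramsey arguments. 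The paper's version is marginally slicker (one invocation of Ramsey), while yours keeps the number of colors at $c$ throughout and makes the role of each level pair more transparent; neither offers a real advantage over the other here.
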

    \begin{proof}
    The first step is to use the coloring on $\tn{VP}(\tree{h})$ to induce a coloring on the leaves, $\treeleaves{h}$.  From there, we apply the Infinite Ramsey Theorem to get our subset $\mathbb{M}$, giving us the result.
    
    Let $\chi: \tn{VP}(\tree{h}) \to \braces{1, 2, ..., c}$ be the coloring map, and let $(n_1, n_2, ..., n_h) = \bar{n} \in \treeleaves{h}$.  Consider for all $0 \leq i < j \leq h$, $v_i = \bar{n}[1:i]$ and $v_j = \bar{n}[1:j]$ where if $i = 0$, then $v_0 = r$.  Observe that $v_i < v_j < \bar{n}$.  Thus, we have that $(v_i, v_j) \in \tn{VP}(\tree{h})$.  Let $A_{\bar{n}}$ be the set of all such pairs.  Since this set has finite cardinality $r^{\binom{h}{2}}$, we can pick an arbitrary ordering of this set, but whatever ordering is chosen, it must be consistently chosen across all $\bar{n} \in \treeleaves{h}$.
    
    Using this, the induced color for our leaf $\bar{n}$ is given by the vector of colors, 
    \[
    (\chi(z_1), \chi(z_2), ..., \chi(z_{\binom{h}{2}}),
    \]
    where the $z_i$'s are the ordered elements of $A_{\bar{n}}$.
    
    Following this procedure for every element of $\treeleaves{h}$ associates a color to each of the leaves of $\tree{h}$.  By the construction of this induced coloring, the leaves are colored by at most $c^{\binom{h}{2}}$ colors.  Since this is a finite number of colors, we can apply the Infinite Ramsey Theorem to extract $\mathbb{M} \subset \n$ such that this induced coloring is monochromatic on the leaves of $\tree[M]{h}$.  Since this induced coloring is monochromatic, we must have that the color vector for every leaf of $\tree[M]{h}$ has the same entry in every position, giving us the result.
    \end{proof}

    \subsubsection{Path Lemma}
    The final necessary lemma is the path lemma.  In the case of countably branching trees, Matoušek's original lemma can be used untouched, but for more complicated graphs, such as diamonds, a more general version is necessary which we will prove in later sections.  Here, we present a reformulated proof of Matoušek's original result.

    To begin, we have a small helper lemma.

    \begin{lemma}[Bound on Ratio of Bounded, Decreasing Sequences]
    \label{lem:sequence_lemma}
    Let $K \geq 1$ and $n \in \n$.  Let $\braces{k_i}_{i = 0}^{n}$ be a decreasing sequence in $[1, K]$.  Then there exists $i \in \braces{0, 1, ..., n}$ such that 
    \[
    1 \leq \frac{k_i}{k_{i + 1}} \leq 1 + \frac{K}{nk_{i + 1}}.
    \]
    In particular, if $n \geq K^p$ for some $p \geq 1$, then we have instead
    \[
    1 \leq \frac{k_i}{k_{i + 1}} \leq 1 + \frac{1}{k_{i + 1}^p}.
    \]
    \end{lemma}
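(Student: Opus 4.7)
The plan is to use a telescoping/pigeonhole argument on consecutive differences of the sequence. Since $\{k_i\}_{i=0}^n$ is decreasing with values in $[1,K]$, the differences $k_i - k_{i+1}$ are nonnegative for $i = 0, \ldots, n-1$, and they telescope to
\[
\sum_{i=0}^{n-1}(k_i - k_{i+1}) = k_0 - k_n \leq K - 1 \leq K.
\]
So the average of the $n$ nonnegative terms is at most $K/n$, and by pigeonhole there exists some index $i \in \{0, 1, \ldots, n-1\}$ with $k_i - k_{i+1} \leq K/n$. (I note the statement says $i \in \{0,\ldots,n\}$, but clearly $i$ must be at most $n-1$ for $k_{i+1}$ to be defined.) For such an $i$, dividing by $k_{i+1} \geq 1$ and adding $1$ gives
\[
\frac{k_i}{k_{i+1}} = 1 + \frac{k_i - k_{i+1}}{k_{i+1}} \leq 1 + \frac{K}{n k_{i+1}},
\]
which is the first conclusion (the lower bound $1 \leq k_i/k_{i+1}$ is immediate from monotonicity).

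For the ``in particular'' refinement, suppose $n \geq K^p$ with $p \geq 1$. It suffices to show that $K/(n k_{i+1}) \leq 1/k_{i+1}^p$, i.e. $K \, k_{i+1}^{p-1} \leq n$. Since $k_{i+1} \leq K$ and $p - 1 \geq 0$, I have $k_{i+1}^{p-1} \leq K^{p-1}$, so $K\, k_{i+1}^{p-1} \leq K^p \leq n$, as required.

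There isn't really a substantive obstacle: the whole statement reduces to the observation that a telescoping sum of nonnegative terms with bounded total admits a small summand, together with the monotonicity bound $k_{i+1} \in [1, K]$ to convert that small difference into a small multiplicative gap. The only thing to be careful about is the indexing convention and making sure the second bound really does follow from $n \geq K^p$ for all $p \geq 1$, which it does by the elementary estimate above.
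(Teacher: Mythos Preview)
Your proof is correct and essentially the same as the paper's: the paper divides $[1,K]$ into $n$ subintervals and uses pigeonhole on the $n+1$ points to force some consecutive pair into one subinterval, yielding $k_i - k_{i+1} \le (K-1)/n \le K/n$, whereas you obtain the identical bound via the telescoping sum $\sum_{i=0}^{n-1}(k_i - k_{i+1}) = k_0 - k_n \le K$ and averaging. The refinement under $n \ge K^p$ is handled identically in both.
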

    \begin{proof}
    Divide the interval $[1, K]$ into $n$ segments, each of length $\frac{K - 1}{n}$.  By the pigeonhole principle, since we have $n + 1$ of the $k_i$'s, there must exist an $i \in \braces{0, ..., n - 1}$ such that $k_i$ and $k_{i +1}$ are in the same interval.  This implies that
    \[
        0 \leq k_i - k_{i + 1} \leq \frac{K - 1}{n} \leq \frac{K}{n}.
    \]

    Dividing through by $k_{i + 1}$ and adding one, we have
    \[
        1 \leq \frac{k_i}{k_{i + 1}} \leq 1 + \frac{K}{nk_{i + 1}}.
    \]
    
    If we have that $n \geq K^p$, then since $K^{p - 1} \geq k_{i + 1}^{p - 1}$ and $p \geq 1$, we have
    \[
        1 \leq \frac{k_i}{k_{i + 1}} \leq 1 + \frac{1}{k_{i + 1}^p}.
    \]

    \end{proof}
    
    \begin{lemma}[Path Lemma]
    \label{lem:path_lemma}
    Let $n \geq 4$, $(X, d)$ a metric space, and $P_{n} = \braces{0, 1, ..., n}$ be viewed as a metric space equipped with the absolute value metric.  Let $f: P_n \to X$ be a mapping such that for some $\lambda > 0$ and $K \geq 1$, we have
    \[
        \lambda\abs{x - y} \leq d_X(f(x), f(y)) \leq \lambda K\abs{x - y}.
    \]
    and define
    \[
        L_{i} := \sup_{\abs{x - y} = 2^{i}}\frac{d(f(x), f(y))}{\lambda 2^{i}}.
    \]

    Then there exists $i \in \braces{0, 1, ..., \floor{\log_2 n}}$ and $v \in P_n$ such that for $Z = \braces{v, v + 2^i, v + 2^{i + 1}}$ we have  every $x, y \in Z$ satisfies,
    \[
        \lambda L_{i + 1}\bigp{1 - \frac{K}{\floor{\log_2 n}L_{i + 1}}}\abs{x - y} \leq d(f(x), f(y)) \leq \lambda L_{i + 1}\bigp{1 + \frac{K}{\floor{\log_2 n}L_{i + 1}}}\abs{x - y}.
    \]

    In particular, if $n \geq 2^{\ceil{2CK^p}}$ for some $p \geq 1$ and $C \geq 1$, then we have
    \[
        \lambda L_{i + 1}\bigp{1 - \frac{1}{2CL_{i + 1}^p}}\abs{x - y} \leq d(f(x), f(y)) \leq \lambda L_{i + 1}\bigp{1 + \frac{1}{2CL_{i + 1}^p}}\abs{x - y},
    \]
    and
    \[
        \dist(f_{|Z}) \leq 1 + \frac{2}{CL_{i + 1}^p}.
    \]
    \end{lemma}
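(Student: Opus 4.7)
The plan is to apply the helper \cref{lem:sequence_lemma} to the sequence $L_0, L_1, \ldots, L_{\floor{\log_2 n}}$. First I would observe that $L_i \in [1, K]$ for all $i$, which is immediate from the two-sided Lipschitz hypothesis on $f$, and that the sequence is decreasing: any pair $x, y \in P_n$ with $|x - y| = 2^{i+1}$ has midpoint $m = (x + y)/2 \in P_n$, so
\[
d(f(x), f(y)) \leq d(f(x), f(m)) + d(f(m), f(y)) \leq 2\lambda L_i 2^i,
\]
which forces $L_{i+1} \leq L_i$. The helper lemma then produces an index $i$ with $L_i/L_{i+1} \leq 1 + K/(\floor{\log_2 n}\, L_{i+1})$.

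Because $P_n$ is finite, the supremum defining $L_{i+1}$ is attained, so I would pick $v \in P_n$ with $d(f(v), f(v+2^{i+1})) = \lambda L_{i+1} 2^{i+1}$ and set $Z = \{v, v+2^i, v+2^{i+1}\}$. The long pair of $Z$ sits inside the target band by construction. For each of the two short pairs, the upper bound $d(f(v), f(v+2^i)) \leq \lambda L_i 2^i$ is immediate from the definition of $L_i$, and substituting the pigeonhole estimate gives exactly the claimed factor $\lambda L_{i+1}(1 + K/(\floor{\log_2 n}\, L_{i+1})) 2^i$. The matching lower bound is the only step requiring real work: the triangle inequality on the long pair, together with extremality of $v$, yields
\[
\lambda L_{i+1} 2^{i+1} = d(f(v), f(v+2^{i+1})) \leq d(f(v), f(v+2^i)) + d(f(v+2^i), f(v+2^{i+1})),
\]
so bounding one summand by $\lambda L_i 2^i$ and rearranging via $L_i \leq L_{i+1}(1 + K/(\floor{\log_2 n}\, L_{i+1}))$ will produce the required lower bound, and the symmetric argument handles the other short pair.

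For the ``In particular'' clause, I would use $\floor{\log_2 n} \geq \ceil{2CK^p} \geq 2CK^p$ combined with $L_{i+1}^{p-1} \leq K^{p-1}$ (valid since $L_{i+1} \leq K$ and $p \geq 1$) to rewrite
\[
\frac{K}{\floor{\log_2 n}\, L_{i+1}} \leq \frac{K}{2CK^p L_{i+1}} \leq \frac{1}{2CL_{i+1}^p}.
\]
The distortion bound then reduces to the elementary inequality $(1+\alpha)/(1-\alpha) \leq 1 + 4\alpha$ valid for $\alpha \leq 1/2$, applied with $\alpha = 1/(2CL_{i+1}^p) \leq 1/2$ (which holds because $C, L_{i+1} \geq 1$). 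I expect no serious obstacle; the only subtle point is recognizing that the supremum defining $L_{i+1}$ is attained on the finite set $P_n$, which turns $v$ into an exact extremizer and keeps the triangle-inequality lower bound clean.
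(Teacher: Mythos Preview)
Your proposal is correct and follows essentially the same route as the paper's own proof: apply \cref{lem:sequence_lemma} to the decreasing sequence $(L_i)$, choose $v$ attaining $L_{i+1}$, bound the short pairs above by $\lambda L_i 2^i$ and below via the reverse triangle inequality against the long pair, and then deduce the refined estimate and distortion bound from $(1+\alpha)/(1-\alpha)\le 1+4\alpha$ for $\alpha\le 1/2$. The only cosmetic difference is that you make explicit the attainment of the supremum on the finite set $P_n$, which the paper uses silently.
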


    \begin{proof}   
    Observe that $K \geq L_{i} \geq L_{i + 1} \geq 1$ by the triangle inequality and $\lambda K$ being the Lipschitz constant of $f$.  With this in mind, we can consider the sequence $\braces{L_{i}}_{i = 0}^{\floor{\log_2 n}}$.  Observe that this is a decreasing sequence on the interval $[1, K]$, so we may apply the Sequence Lemma from above.  This gives us that for some $i$
    \[
        1 \leq \frac{L_{i}}{L_{i + 1}} \leq 1 + \frac{K}{\floor{\log_2 n}L_{i + 1}}.
    \]

    Let $v, v + 2^{i + 1} \in P_n$ be such that $L_{i + 1}$ is attained on them, i.e.,
    \[
        L_{i + 1} = \frac{d(f(v), f(v + 2^{i + 1}))}{\lambda 2^{i + 1}}.
    \]

    We now let $Z = \braces{v, v + 2^i, v + 2^{i + 1}}$ and wish to get bounds on the quantity $d(f(x), f(y))$ for $x, y \in Z$.  Let $\abs{x - y} = 2^i$, then we have
    \begin{align*}
        d(f(x), f(y)) \leq 2^i\lambda L_{i} \leq 2^i\lambda\bigp{1 + \frac{K}{\floor{\log_2 n}L_{i + 1}}}L_{i + 1}.
    \end{align*}
    By using the reverse triangle inequality and the upper bound above, we have for $v, v + 2^i$ 
    \begin{align*}
        d(f(v), f(v + 2^i)) &\geq d(f(v), f(v + 2^{i + 1})) - d(f(v + 2^{i + 1}), f(v + 2^i)) \\
                            &\geq 2^{i + 1}\lambda L_{i + 1} - 2^i\lambda\bigp{1 + \frac{K}{\floor{\log_2 n}L_{i + 1}}}L_{i + 1} \\
                            &= 2^i\lambda L_{i + 1}\bigp{1 - \frac{K}{\floor{\log_2 n}L_{i + 1}}}.
    \end{align*}
    Using a similar argument for $v + 2^i, v + 2^{i + 1}$, we see
    \begin{align*}
        d(f(v + 2^i), f(v + 2^{i + 1})) \geq 2^i\lambda L_{i + 1}\bigp{1 - \frac{K}{\floor{\log_2 n}L_{i + 1}}}.
    \end{align*}

    This implies that for $x, y \in Z$ and the fact that $v, v + 2^{i + 1}$ achieves $L_{i + 1}$ we have
    \begin{align*}
        \lambda L_{i + 1}\bigp{1 - \frac{K}{\floor{\log_2 n}L_{i + 1}}}\abs{x - y} \leq d(f(x), f(y)) \leq \lambda L_{i + 1}\bigp{1 + \frac{K}{\floor{\log_2 n}L_{i + 1}}}\abs{x - y}.
    \end{align*}
    Thus, since $\frac{1 + x}{1 - x} \leq 1 + 4x$ for $0 \leq x \leq 1/2$, if $\frac{K}{\floor{\log_2 n}L_{i + 1}} \leq 1/2$, then
    \[
        \dist(f_{|Z}) \leq \frac{1 + \frac{K}{\floor{\log_2 n}L_{i + 1}}}{1 - \frac{K}{\floor{\log_2 n}L_{i + 1}}} \leq 1 + \frac{4K}{\floor{\log_2 n}L_{i + 1}}.
    \]
    This is satisfied exactly when $n \geq 2^{\ceil{2CK^p}}$ for some $p \geq 1$ and $C \geq 1$, in which case, we also have the improved estimate
    \[
        \dist(f_{|Z}) \leq 1 + \frac{4K}{\floor{\log_2 n}L_{i + 1}} \leq 1 + \frac{4K}{2CK^pL_{i + 1}} \leq 1 + \frac{2}{CL_{i + 1}^p}, 
    \]
    and we have
    \[
        \lambda L_{i + 1}\bigp{1 - \frac{1}{2CL_{i + 1}^p}}\abs{x - y} \leq d(f(x), f(y)) \leq \lambda L_{i + 1}\bigp{1 + \frac{1}{2CL_{i + 1}^p}}\abs{x - y}.
    \]
    \end{proof}
    
    \subsection{Embedding Obstruction Theorem}
    \begin{theorem}[Lower Bound for Embedding of Countably Branching Trees]
    \label{thm:infrasup_p_umbel_lower_bound_infinite_trees}
    Let $(X, d_X)$ be a metric space which satisfies the infrasup $p$-umbel inequality (\cref{def:infrasup_p_umbel_inequality}) for some $C_U \geq 1$ and $p \geq 1$. Then the minimum distortion necessary for embedding $\tree{h}$ into $X$ is at least $C(\log_2 h)^{1/p}$ for some $C > 0$, dependent only on $p$ and $C_U$.
    \end{theorem}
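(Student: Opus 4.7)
The plan is to combine the three lemmas (path, coloring, umbel) in a Matou\v{s}ek-style argument. Let $f: \tree{h} \to X$ be any bi-Lipschitz embedding, with lower scaling $\lambda > 0$ and distortion $K \geq 1$, and fix a constant $C$ depending only on $p$ and $C_U$ (we will require $C > 2(3C_U)^p$). If $h < 2^{\ceil{2CK^p}}$, rearranging immediately yields $K \geq c(\log_2 h)^{1/p}$ for some $c = c(p, C_U) > 0$, so we may assume $h \geq 2^{\ceil{2CK^p}}$ and work to derive a contradiction.

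For each leaf $\bar{n} \in \treeleaves{h}$, apply the Path Lemma (\cref{lem:path_lemma}) to the root-to-$\bar{n}$ path, viewed as a copy of $P_h$. This produces a scale $i_{\bar{n}} \in \braces{0, \ldots, \floor{\log_2 h}}$, a position $v_{\bar{n}}$, and a value $L_{\bar{n}} \in [1, K]$ with the property that the triple of ancestors of $\bar{n}$ at depths $v_{\bar{n}}, v_{\bar{n}} + 2^{i_{\bar{n}}}, v_{\bar{n}} + 2^{i_{\bar{n}}+1}$ is embedded with distortion at most $1 + \frac{2}{CL_{\bar{n}}^p}$. I would then color each leaf by the finite-valued triple $(i_{\bar{n}}, v_{\bar{n}}, \floor{\log_2 L_{\bar{n}}})$ and apply the Infinite Ramsey Theorem (as in the proof of \cref{lem:tree_coloring_lemma}) to obtain an infinite $\mathbb{M} \subset \n$ on which every leaf of $\treeleaves[M]{h}$ carries the same triple $(i, v, j)$.

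Now fix any vertex $w \in \tree[M]{h}$ at depth $v$ and any descendant $z$ of $w$ at depth $v + 2^i$. There are infinitely many descendants $x_1, x_2, \ldots$ of $z$ at depth $v + 2^{i+1}$ in $\tree[M]{h}$, each extending to a leaf $\bar{n}_k$. By construction, the triple produced by the path lemma applied to the root-to-$\bar{n}_k$ path is precisely $\braces{w, z, x_k}$, embedded with distortion at most $1 + \frac{2}{CL_{\bar{n}_k}^p}$. Because $d_X(f(w), f(z))$ is a single number independent of $k$ while the path lemma forces it to lie in the interval $[\lambda L_{\bar{n}_k}(1 - \frac{1}{2CL_{\bar{n}_k}^p})2^i, \lambda L_{\bar{n}_k}(1 + \frac{1}{2CL_{\bar{n}_k}^p})2^i]$ for every $k$, the $L_{\bar{n}_k}$ are pinned to a narrow range about a common value $L_0$. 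Consequently, $\braces{w, z, x_1, x_2, \ldots}$ is a vertical $\delta$-umbel (\cref{def:vert_delta_umbel}) with scale $\theta$ comparable to $\lambda L_0 2^i$ and $\delta$ comparable to $1/(CL_0^p)$.

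Applying the Infrasup Umbel Inequality Lemma (\cref{lem:infrasup_umbel_inequality_lemma}) to this umbel yields $\inf_{k \neq l} d_X(f(x_k), f(x_l)) \leq 6C_U\theta\delta^{1/p}$, which simplifies to at most a constant multiple of $C_U \lambda 2^i / C^{1/p}$ (the $L_0$ cancels). But $d_T(x_k, x_l) = 2^{i+1}$ since $x_k, x_l$ share first common ancestor $z$, so the lower Lipschitz bound gives $d_X(f(x_k), f(x_l)) \geq 2\lambda \cdot 2^i$. Choosing $C$ larger than an appropriate constant multiple of $C_U^p$ makes these two bounds incompatible, producing the contradiction and hence $K \geq c(\log_2 h)^{1/p}$. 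The main subtlety is maintaining a uniform $\theta$ and $\delta$ for the umbel as $k$ varies, which is exactly what fixing $w$ and $z$ accomplishes: $d_X(f(w), f(z))$ is now a single number, and it pins all the $L_{\bar{n}_k}$ to a common scale via the two-sided estimate of the path lemma.
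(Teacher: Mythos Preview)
Your argument is correct and constitutes a genuine reorganization of the paper's proof. The paper first colors every pair in $\tn{VP}(\tree{h})$ by $\lfloor\log_{1+\gamma}(d_X/d_T)\rfloor$ for a carefully tuned $\gamma$, invokes the pair-coloring lemma (\cref{lem:tree_coloring_lemma}) to obtain a horizontally monochromatic subtree, applies the path lemma \emph{once} to a single root--leaf path to obtain $\{w,z,x_1\}$, and finally uses monochromaticity to transport the path-lemma estimates from $(z,x_1)$ and $(w,x_1)$ to every sibling $(z,x_j)$ and $(w,x_j)$ at the cost of a $(1+\gamma)^2$ factor. You instead front-load the path lemma, applying it to \emph{every} root--leaf path, and then Ramsey directly on the leaves via the output data $(i_{\bar n},v_{\bar n})$; once $i$ and $v$ are frozen, the single number $d_X(f(w),f(z))$ lies in each interval $[\lambda L_{\bar n_k}(1-\tfrac{1}{2CL_{\bar n_k}^p})2^i,\ \lambda L_{\bar n_k}(1+\tfrac{1}{2CL_{\bar n_k}^p})2^i]$, which squeezes all the $L_{\bar n_k}$ to within $O(1/(C L_0^{p-1}))$ of a common value and manufactures the $\delta$-umbel with $\delta\asymp 1/(CL_0^p)$ without ever introducing the auxiliary base $1+\gamma$.

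What each buys: the paper's route is more modular---\cref{lem:tree_coloring_lemma} is stated and proved separately, and the same ``color pairs by log-distortion, then refine'' template is reused verbatim in the diamond section---whereas your route is leaner for trees specifically, bypassing \cref{lem:tree_coloring_lemma} entirely and replacing the $\gamma$-bookkeeping by the pinning observation. Two small remarks: the coordinate $\lfloor\log_2 L_{\bar n}\rfloor$ in your coloring is superfluous, since a factor-of-$2$ window is far coarser than what the pinning already gives you; and your ``narrow range'' claim deserves one explicit line, namely that $|L_{\bar n_k}-L_0|\le 1/(2C L_{\bar n_k}^{p-1})\le 1/(2C)$ with $L_0:=d_X(f(w),f(z))/(\lambda 2^i)$, from which the umbel parameters follow by the same $\tfrac{1+x}{1-x}\le 1+4x$ manipulation the paper uses.
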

    \begin{proof}
    Let $h \in \n$ and suppose that the tree $\tree{h}$ embeds into $X$ with Lipschitz map $f: \tree{h} \to X$ for some $\lambda > 0$ and $K \geq 1$ such that for every $\bar{m}, \bar{n} \in \tree{h}$,
    \[
    \lambda d_{T}(\bar{m}, \bar{n}) \leq d_X(f(\bar{m}), f(\bar{n})) \leq  K \lambda d_{T}(\bar{m}, \bar{n}).
    \]

    Observe, however, that without loss of generality we can assume that $\lambda = 1$ by rescaling the metric $d_X$, which preserves the infrasup $p$-umbel inequality and its constant, $C_U$.
    We will show that this distortion must grow at a particular rate with respect to $h$, i.e., $K > O((\log_2 h)^{1/p})$ for sufficiently large $h$.

    This proof will have four main parts.  First, using the coloring lemma (\cref{lem:tree_coloring_lemma}) we will show that there exists a subtree with a horizontally monochromatic coloring created from the log distortion of pairs of vertices in $\tn{VP}(\tree{h})$.  Second, we will use the path lemma (\cref{lem:path_lemma}) to find a subset of an arbitrary root leaf path in $\tree{h}$ that is well embedded into $X$.  Third, we will leverage our horizontally monochromatic coloring and the well embedded subset to construct a vertical $\delta$-umbel in $X$.  Finally, we can then apply the umbel lemma (\cref{lem:infrasup_umbel_inequality_lemma}) to realize a contradiction to the conditions of the path lemma, giving us the desired bound.
    
    Let $\gamma > 0$ be a parameter to be chosen later, and set $r = \ceil{\log_{1 + \gamma}K}$.  Color the pairs in $\tn{VP}(\tree{h})$ according to the log distortion of their embedding by $f$, namely a pair $(\bar{m}, \bar{n}) \in \tn{VP}(\tree{h})$ gets the color
    \[
    \bfloor{\log_{1 + \gamma}\bigp{\frac{d_X(f(\bar{m}), f(\bar{n}))}{d_{T}(\bar{m},\bar{n})}}} \in \braces{0, ..., r}
    \]
    We use the log distortion rather than the distortion in order to facilitate later computations and give credit to \cite{mendel_naor_matousek_proof} for this technical argument. By the coloring lemma (\cref{lem:tree_coloring_lemma}), there exists $\tree[M]{h} \subset \tree{h}$ such that the color of pairs $(\bar{m}, \bar{n}) \in \tn{VP}(\tree[M]{h})$ only depends on the level of $\bar{m}$ and $\bar{n}$.
    
    With this in mind, let $\bar{m}$ be a leaf in $\tree[M]{h}$.  This creates a path of length $h$ from the root $r$ to $\bar{m}$ given by $r \to \bar{m}[1] \to \bar{m}[1:2] \to ... \to \bar{m}$. This is isometric to $P_h$, so by the path lemma, there exists $i \in \braces{0, ..., \floor{\log_2 h}}$ and $Z = \braces{w, z, x_1}$ such that $d_T(w, z) = d_T(z, x_1) = \frac{d_T(w, x_1)}{2}$ and we have for $x, y \in Z$
    \[
        L_{i + 1}\bigp{1 - \frac{K}{\floor{\log_2 h}L_{i + 1}}}d_T(x, y) \leq d_X(f(x), f(y)) \leq L_{i + 1}\bigp{1 + \frac{K}{\floor{\log_2 h}L_{i + 1}}}d_T(x, y),
    \]
    where $L_{i + 1} = \frac{d_X(f(w), f(x_1))}{d_T(w, x_1)}$.  We now wish to extend this to a vertical umbel in the tree by choosing good parameters for $\gamma$.  To this end, we have the following claim:
    \begin{claim}
        If we have
        \begin{align*}
            \gamma = \frac{1}{7(6C_U)^pK^p} \leq \frac{1}{7(6C_U)^pL_{i + 1}^p}, \quad\quad \theta = \frac{L_{i + 1}\bigp{1 - \frac{1}{14(6C_U)^pL_{i + 1}^p}}2^i}{1 + \frac{1}{7(6C_U)^pK^p}}
        \end{align*} 
        with $h \geq 2^{56(6C_U)^pK^p} \geq 2^{\ceil{28(6C_U)^pK^p}}$, then for $\delta = \frac{1}{(6C_U)^pL_{i + 1}^p}$, there exists $x_2, x_3, ... \in X$ such that
        $U = \braces{f(w), f(z), f(x_1), ...} \subset X$ forms a $\delta$-umbel.
    \end{claim}

    Assuming the claim, we show how to proceed.  If $K \geq ((56^{1/p})6C_U)^{-1}(\log_2h)^{1/p}$, then this is the result.  Suppose on the other hand that $K < ((56^{1/p})6C_U)^{-1}(\log_2h)^{1/p}$.  By solving for $h$, this implies that $h > 2^{56(6C_U)^pK^p}$.  Thus, by utilizing the claim, we have that $U = \braces{f(w), f(z), f(x_1), f(x_2), ...} \subset X$ is a vertical $\delta$-umbel in $X$ with $\delta = \frac{1}{(6C_U)^pL_{i + 1}^p}$.  Observe that $\theta \leq 2^iL_{i + 1}$.  Thus, when we apply the Umbel Lemma to our $\delta$-umbel, we have
    \begin{align*}
        2^{i + 1} \leq \inf_{i \neq j}d_X(f(x_i), f(x_j)) \leq 6C_U\theta\delta^{1/p} \leq \frac{6C_U(2^iL_{i + 1})}{6C_UL_{i + 1}} = 2^i,
    \end{align*}
    which is a contradiction, thus proving the result.
    \end{proof}

    \begin{proof}[Proof of claim]
    For now, to better illustrate where exactly these values come from, we will give the exact values of the parameters only when it becomes necessary in the course of this proof. To this end, if $h \geq 2^{\ceil{2CK^p}}$ for some $C \geq 1$ to be determined later and $p \geq 1$, then we have by the path lemma for some $i$
    \begin{align*}
        L_{i + 1}\bigp{1 - \frac{1}{2CL_{i + 1}^p}}2^i &\leq d_X(f(w), f(z)) \leq  L_{i + 1}\bigp{1 + \frac{1}{2CL_{i + 1}^p}}2^i \\
        L_{i + 1}\bigp{1 - \frac{1}{2CL_{i + 1}^p}}2^i &\leq d_X(f(z), f(x_1)) \leq L_{i + 1}\bigp{1 + \frac{1}{2CL_{i + 1}^p}}2^i \\
        L_{i + 1}\bigp{1 - \frac{1}{2CL_{i + 1}^p}}2^{i + 1} &\leq d_X(f(w), f(x_1)) \leq L_{i + 1}\bigp{1 + \frac{1}{2CL_{i + 1}^p}}2^{i + 1}
    \end{align*}
    where $w, z, x_1 \in \tree[M]{h}$ were the elements generated by the application of the path lemma.

    Now observe that by the coloring lemma, we have for every $\bar{n} \in \tree[M]{h}$ such that $z < \bar{n}$ and $d_T(z, \bar{n}) = 2^i$, that the color of $(z, \bar{n})$ is the same as the color of $(z, x_1)$.  Thus we can enumerate all of these $\bar{n} \in \tree[M]{h}$ as $\braces{x_2, x_3, x_4, ....}$ where for every $i \in \n$, we have $d_T(z, x_i) = d_T(w, z) = \frac{d_T(w, z)}{2}$. This gives us that the colors of $(z, x_i)$ and $(z, x_j)$ for every $i \neq j \in \n$ are the same.  Armed with this information, we see that for every $i \in \n$,
    \[
        \bfloor{\log_{1 + \gamma}\bigp{\frac{d_X(f(z), f(x_1))}{d_T(z, x_1)}}} = \bfloor{\log_{1 + \gamma}\bigp{\frac{d_X(f(z), f(x_i))}{d_T(z, x_i)}}}.
    \]

    Observe now by the definition of the floor function, we have
    \[
       \log_{1 + \gamma}\bigp{\frac{d_X(f(z), f(x_1))}{d_T(z, x_1)}} - 1 \leq \log_{1 + \gamma}\bigp{\frac{d_X(f(z), f(x_i))}{d_T(z, x_i)}} \leq \log_{1 + \gamma}\bigp{\frac{d_X(f(z), f(x_1))}{d_T(z, x_1)}} + 1.
    \]

    Similarly, since we know that $\log_{1 + a}(1 + a) = 1$ for all $a > 0$, we have
    \[
        \log_{1 + \gamma}\bigp{\frac{d_X(f(z), f(x_1))}{(1 + \gamma)d_T(z, x_1)}} \leq \log_{1 + \gamma}\bigp{\frac{d_X(f(z), f(x_i))}{d_T(z, x_i)}} \leq \log_{1 + \gamma}\bigp{\frac{(1 + \gamma)d_X(f(z), f(x_1))}{d_T(z, x_1)}}.
    \]

    Now, applying the upper and lower bounds for $d_X(f(z), f(x_1))$ given by the path lemma, this gives us
    \begin{align*}
        \frac{L_{i + 1}\bigp{1 - \frac{1}{2CL_{i + 1}^p}}2^i}{(1 + \gamma)d_T(z, x_1)} &\leq \frac{d_X(f(z), f(x_i))}{d_T(z, x_i)} \leq \frac{(1 + \gamma)L_{i + 1}\bigp{1 + \frac{1}{2CL_{i + 1}^p}}2^i}{d_T(z, x_i)}.
    \end{align*}

    Notice here that this line implies that moving from one tip of the umbel to another, only incurs a $1 + \gamma$ penalty for some $\gamma$, and the distortion gains a factor of $(1 + \gamma)^2$.  In addition, by following the same steps for $w$ and $x_i$ for $i \in \n$, we have the same inequality chain with a factor of two appearing on the left and the right.  This gives us then that for all $i \geq 1$, since $1 + \gamma \geq 1$
    \begin{align*}
        \frac{L_{i + 1}\bigp{1 - \frac{1}{2CL_{i + 1}^p}}2^i}{1 + \gamma} &\leq d_X(f(w), f(z)) \leq L_{i + 1}\bigp{1 + \frac{1}{2CL_{i + 1}^p}}(1 + \gamma)2^i \\
        \frac{L_{i + 1}\bigp{1 - \frac{1}{2CL_{i + 1}^p}}2^i}{1 + \gamma} &\leq d_X(f(z), f(x_i)) \leq L_{i + 1}\bigp{1 + \frac{1}{2CL_{i + 1}^p}}(1 + \gamma)2^i \\
        \frac{L_{i + 1}\bigp{1 - \frac{1}{2CL_{i + 1}^p}}2^{i + 1}}{1 + \gamma} &\leq d_X(f(w), f(x_i)) \leq L_{i + 1}\bigp{1 + \frac{1}{2CL^p}}(1 + \gamma)2^{i + 1}.
    \end{align*}

    From here, we are ultimately trying to find a way to rewrite this as a $\delta$-umbel for some $\delta$, but this requires choosing $\theta$.  In this case, the most logical option would be
    \[
        \theta =  \frac{L_{i + 1}\bigp{1 - \frac{1}{2CL_{i + 1}^p}}2^i}{1 + \gamma}.
    \]
    This change of variable trick, along with the estimate $\frac{1 + x}{1 - x} \leq 1 + 4x$ for $0 \leq x \leq \frac{1}{2}$, enables us to rewrite these inequalities with the distortion easily calculable, i.e.,
    \begin{align*}
        \theta &\leq d_X(f(w), f(z)) \leq \theta(1 + \gamma)^2\bigp{1 + \frac{2}{CL_{i + 1}^p}} \\
        \theta &\leq d_X(f(z), f(x_i)) \leq \theta(1 + \gamma)^2\bigp{1 + \frac{2}{CL_{i + 1}^p}} \\
        2\theta &\leq d_X(f(w), f(x_i)) \leq 2\theta(1 + \gamma)^2\bigp{1 + \frac{2}{CL_{i + 1}^p}}.
    \end{align*}

    This gives us a distortion of $(1 + \gamma)^2\bigp{1 + \frac{2}{CL_{i + 1}^p}}$ when $f$ is restricted to any triple $\braces{w, z, x_i}$.  We would like, however, for this to be of the form $1 + \delta$.  This is where our choice of $\gamma$ and $C$ comes into play.  It is important to notice that $\gamma$, due to how early in the proof it was chosen, can only depend on $K, p,$ and $C_U$.  As a result, we will choose $\gamma = \frac{2}{CK^p} \leq \frac{2}{CL_{i + 1}^p}$.  Observe that this inequality respects the upper and lower portions of the chain of inequalities we have established thus far, i.e., replacing $\gamma$ with $\frac{2}{CL_{i + 1}^p}$ instead of  $\frac{2}{CK^p}$ inside the $(1 + \gamma)^3$ term will force the rightmost inequalities to be larger, preserving the inequalities.  Thus, we now have for $\gamma = \frac{2}{CK^p} \leq \frac{2}{CL_{i + 1}^p}$
    \begin{align*}
        \theta &\leq d_X(f(w), f(z)) \leq \theta\bigp{1 + \frac{2}{CL_{i + 1}^p}}^3 \\
        \theta &\leq d_X(f(z), f(x_i)) \leq \theta\bigp{1 + \frac{2}{CL_{i + 1}^p}}^3 \\
        2\theta &\leq d_X(f(w), f(x_i)) \leq 2\theta\bigp{1 + \frac{2}{CL_{i + 1}^p}}^3
    \end{align*}
    
    In addition, by recognizing that $(1 + x)^3 \leq 1 + 7x$ so long as $0 \leq x \leq 1$, our choice for $C$ should be one which guarantees that $\frac{2}{CL_{i + 1}^p} \leq 1$ which happens for any $C \geq 2$ since $L_{i + 1}^p \geq 1$.  Looking to when we apply the umbel lemma, however, we would like to cancel the term $6C_U$ by our choice of $C$ and force $\delta < 1$.  As a result, we let $C = 14(6C_U)^p \geq 2$ for $p \geq 1$, though this may be improved with a more careful choice of $C$.  In addition, notice that this necessitates $C_U^p \geq \frac{1}{7(6)^p}$, but if $C_U^p$ is smaller than this quantity, we simply omit it from our constant $C$. This final choice now forces the distortion on $w, z,$ and $x_i$ to be $1 + \frac{1}{(6C_U)^pL_{i + 1}^p}$, giving us for 
    \begin{align*}
        \gamma = \frac{1}{7(6C_U)^pK^p}, \quad
        \theta = \frac{L_{i + 1}\bigp{1 - \frac{1}{2CL_{i + 1}^p}}2^i}{1 + \frac{1}{7(6C_U)^pK^p}}, \quad  
        \delta = \frac{1}{(6C_U)^pL_{i + 1}^p},
    \end{align*}
    that the following estimate holds
    \begin{align*}
        \theta &\leq d_X(f(w), f(z)) \leq \theta(1 + \delta) = \theta\bigp{1 + \frac{1}{(6C_U)^pL_{i + 1}^p}}\\
        \theta &\leq d_X(f(z), f(x_i)) \leq \theta(1 + \delta) = \theta\bigp{1 + \frac{1}{(6C_U)^pL_{i + 1}^p}}\\
        2\theta &\leq d_X(f(w), f(x_i)) \leq 2\theta(1 + \delta) = 2\theta\bigp{1 + \frac{1}{(6C_U)^pL_{i + 1}^p}}.
    \end{align*}
    \end{proof}

    Notice, now, that as a direct consequence of this theorem and \cref{lem:rolewicz_implies_umbel}, we have the following theorem, proven previously in \cite{inf_tree_embedding}.

    \begin{theorem}[$(\beta_p)$ Embedding Obstruction]
    Let $X$ be a Banach space which satisfies the $(\beta_p)$ property for some $p > 1$. Then the minimum distortion necessary for embedding $T_n^\omega$ into $X$ is at least $C(\log n)^{1/p}$ for some $C > 0$, dependent only on $p$ and $C_\beta$.
    \end{theorem}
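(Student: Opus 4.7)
The plan is to observe that this theorem is an immediate corollary of the two main results already proven in the paper, namely \cref{lem:rolewicz_implies_umbel} and \cref{thm:infrasup_p_umbel_lower_bound_infinite_trees}. No new arguments are required; the proof is entirely a matter of chaining the two implications together and tracking the dependence of the constants.

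First, I would apply \cref{lem:rolewicz_implies_umbel} to the Banach space $X$. Since $X$ satisfies property $(\beta_p)$ for some $p > 1$ with constant $C_\beta$, the lemma yields that $X$, viewed as a metric space, satisfies the infrasup $p$-umbel inequality with constant $C_U = C_\beta^{1/p}$. Note that this step crucially requires $p > 1$, which matches the hypothesis of the theorem. Second, I would invoke \cref{thm:infrasup_p_umbel_lower_bound_infinite_trees} on the metric space $(X, \norm{\cdot})$, which now satisfies the infrasup $p$-umbel inequality with $p \geq 1$ and the constant $C_U$ computed above. The theorem immediately produces a constant $C > 0$, depending only on $p$ and $C_U$, such that any bi-Lipschitz embedding of $T_n^\omega$ into $X$ has distortion at least $C(\log_2 n)^{1/p}$.

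Finally, since $C_U = C_\beta^{1/p}$ depends only on $p$ and $C_\beta$, the constant $C$ obtained in the conclusion also depends only on $p$ and $C_\beta$, as claimed. Replacing $\log_2$ by $\log$ only changes the constant by a factor of $\log 2$, which can be absorbed into $C$. There is no substantive obstacle to overcome here; the real work was done in establishing \cref{lem:rolewicz_implies_umbel} (to bridge the linear and metric settings) and in \cref{thm:infrasup_p_umbel_lower_bound_infinite_trees} (to carry out the Matou\v{s}ek-style coloring, path, and umbel arguments), so the proof of this final corollary amounts to a single sentence of bookkeeping.
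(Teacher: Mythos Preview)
Your proposal is correct and matches the paper's approach exactly: the paper states this theorem as a direct consequence of \cref{thm:infrasup_p_umbel_lower_bound_infinite_trees} and \cref{lem:rolewicz_implies_umbel}, with no additional argument given. Your tracking of the constant $C_U = C_\beta^{1/p}$ and the absorption of the logarithm base into $C$ is precisely the bookkeeping the paper leaves implicit.
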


\section{Countably Branching Diamonds}
    Abstracting from the proof of \cref{thm:infrasup_p_umbel_lower_bound_infinite_trees}, there are three steps needed to check the embeddability of certain classes of graphs into various Banach spaces satisfying asymptotic inequalities.  First, an appropriate subgraph must be identified which is in some way intrinsic to the family of graphs considered and can appropriately utilize the given asymptotic inequality.  In the case of trees, we considered the $\delta$-umbel and property $(\beta_p)$, but in the case of the countably branching diamond graphs $D_n^\omega$, we will be looking for sub-diamonds $D_1^\omega$ and asymptotic midpoint uniform convexity (AMUC).  These diamonds, as we will show, form the base case for constructing the diamond graphs and have very natural midpoints to consider.

    Secondly, we need to be able to extract subgraphs which have \quotes{good} colorings.  These graphs should also be, in the asymptotic setting, graph isomorphic to a copy of the original graph.  For trees, we extracted a subgraph on which every element of $\tn{VP}(\tree{h})$ at the same level was monochromatic, but for diamonds, the situation is a bit more precarious.  With trees, every vertex can be \textit{uniquely} identified with a subpath of a root-leaf path, but diamonds, by virtue of having cycles, do not have this property.  As a result, we will present a weaker formulation of a coloring lemma which will yield a subgraph whose coloring is horizontally monochromatic when restricted to scaled $D_1^\omega$ sub-diamonds of $D_n^\omega$.

    Finally, we need some way of finding well embedded copies of these important subgraphs.  A more generalized version of the path lemma can be used just as in the case of trees with one of the key differences being the importance of the progression of points yielded by the lemma.  In the case of trees, any progression of three equally spaced vertices on an arbitrary root-leaf path could be used to generate a $\delta$-umbel.  In the case of diamonds, however, as we will see, we will need to consider specific subsets of paths connecting the $s$ and $t$ vertices of the diamond.
    
    \subsection{Definitions}
    To begin, we shall give a graph theoretic definition of $st$-graphs, slash products, and the diamond graphs.

    For the reader's convenience, we reproduce here the definitions of $st$-graphs and slash products from \cite{baudier_diamonds}.  All graphs in this paper are assumed to be connected, simple, unweighted, and equipped with the shortest path metric $d$.

    \begin{definition}[$st$-Graphs and Slash Products]
        A directed graph $G = (V, E)$ is called an $st$-graph if it has two distinguished points $s$ and $t$, which we will denote $s(G)$ and $t(G)$.  In this paper, the orientation of an edge, if not explicitly stated, will be viewed as ``flowing" from $s(G)$ to $t(G)$, i.e., if $e = (u, v) \in E(G)$, then $d(u, s(G)) < d(v, s(G))$.  Given two $st$-graphs, $H$ and $G$, there is a natural operation which composes them.  We call this composition the slash product, given by $H \varoslash G$.  This composition can be viewed as replacing every edge of $H$ with a copy of $G$.  In some cases, we may take $e = (u, v) \in E(H)$ and use $e \varoslash G$ to refer to the copy of $G$ between $u$ and $v$, or $e \varoslash v_i$ to refer to the vertex $v_i$ of $G$ which comes from replacing the edge $e \in E(H)$ with $G$. We also take $V(e \varoslash G)$ to be the union of $V(G) / \braces{s(G), t(G)}$ and $\braces{u, v}$.  In these cases, we view the directed edge $e$ as an $st$-graph with $s(e) = u$ and $t(e) = v$ and treat this as a way of indexing into specific pieces of the graph $H \varoslash G$. The following three steps rigorously define this process:
        \begin{enumerate}
            \item $V(H \varoslash G) := V(H) \cup \bracks{E(H) \times (V(G) / \braces{s(G), t(G)})}$.
            \item Given an oriented edge $e = (u, v) \in E(H)$, there are $|E(G)|$ edges given by the union of the following sets which account for edges contained within $G$ that do not connect to $s(G)$ or $t(G)$ :
            \begin{enumerate}
                \item $\bigbrace{ \ \bigp{(e, v_1), (e, v_2)} \ | \ (v_1, v_2) \in E(G) \tn{ and } v_1, v_2 \notin \braces{s(G), t(G)}}$
                \item $\bigbrace{ \ (u, (e, w)) \ | \ (s(G), w) \in E(G)} \cup \bigbrace{ \ ((e, w), u) \ | \ (w, s(G)) \in E(G)}$
                \item $\bigbrace{ \ ((e, w), v) \ | \ (w, t(G)) \in E(G)} \cup \bigbrace{ \ (v, (e, w)) \ | \ (t(G), w) \in E(G)}$.
            \end{enumerate}
            \item $s(H \varoslash G) = s(H)$ and $t(H \varoslash G) = t(H)$, and similarly, for $e = (u, v) \in E(H)$, $s(e \varoslash G) = u$ and $t(e \varoslash G) = v$.
        \end{enumerate}

        Since the slash product is associative and also a directed $st$-graph, one can define the slash power of a directed $st$-graph, $G^{\varoslash^n}$, for all $n \in \n$ by setting $G^{\varoslash^1} := G$ and $G^{\varoslash^{n + 1}} := G^{\varoslash^n} \varoslash G$.  As a convention, we let $G^{\varoslash^0}$ be the graph consisting of a single edge connecting $s(G)$ and $t(G)$.  As also noted in \cite{baudier_diamonds}, symmetric graphs (viewed as embedded in the plane) do not depend on the orientation of the edges of $G$.
    \end{definition}

    Following the example of \cite{baudier_diamonds}, we can now set forth a definition of $D_1^k$, the $k$-branching diamond graph for $k \in \n \cup \braces{\omega}$.  Let $K_{2, k}$ be the complete bipartite graph with two vertices on the left and $k$ many vertices on the right, where every vertex on the left is connected to every vertex on the right.  The vertices on the left can be labeled $s$ and $t$, and the vertices on the right are labeled in an arbitrary way by $\braces{x_i}_{i = 1}^k$.  The set of edges here is given by $\braces{(s, x_i)}_{i = 1}^k \cup \braces{(x_i, t)}_{i = 1}^k$, giving a natural orientation for edges flowing from $s$ to $t$.  This gives us $D_1^k$.  (We can think of moving the $t$ vertex to the other side of the $x_i$ vertices for a more natural shape.)  Now we can define the $n$-th $k$-branching diamond as $D_n^k = (D_1^k)^{\varoslash^n}$.  There are also non-recursive methods of defining the diamond graphs, and we refer the reader to \cite{baudier_diamonds} for a treatment of this topic.

    \begin{figure}%
    \centering
    \subfloat[\centering $D_1^\omega$]{{\includegraphics[width=.4\linewidth]{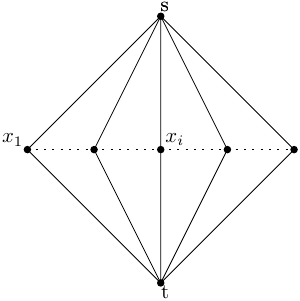} }}%
    \qquad
    \subfloat[\centering $D_2^\omega$]{{\includegraphics[width=.4\linewidth, height=.4\linewidth]{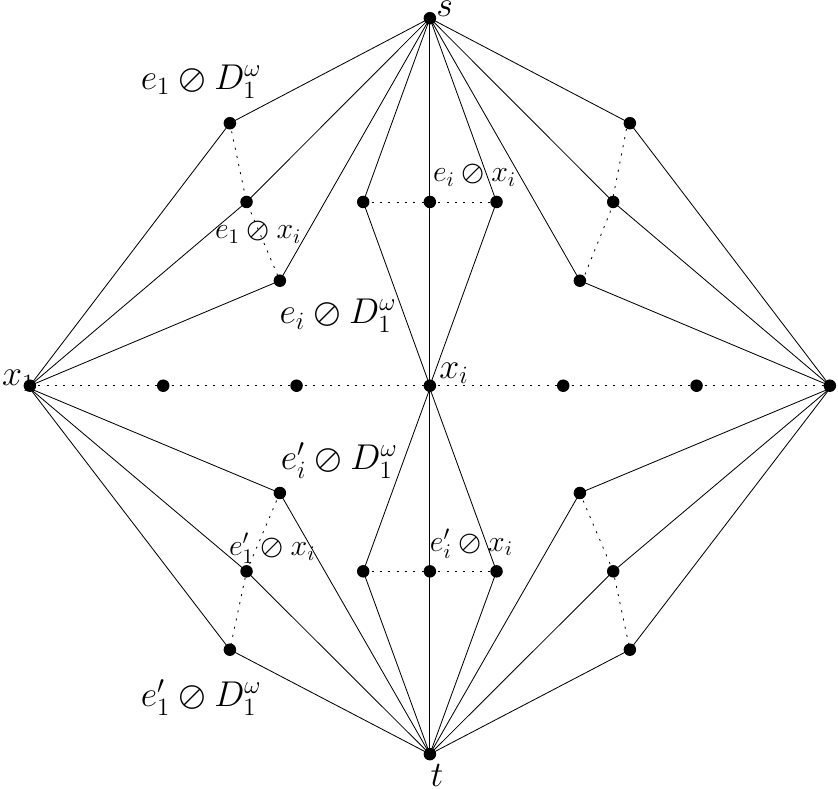} }}%
    \caption{Examples of $D_1^\omega$ and $D_2^\omega$ along with their vertex labeling}%
    \label{fig:diamonds}%
    \end{figure}

    Next, we must define the convexity condition on our Banach space which forces the embedding obstruction.  The $(\beta_p)$ property was natural for studying embedding obstructions of trees, as it dealt with countably branching tripods, an unavoidable structure in the countably branching trees.  Similarly, the midpoints of the countably branching diamonds are a natural target for asymptotic inequalities.  To that end, we first define $\delta$-approximate midpoint sets.

    \begin{definition}[$\delta$-Approximate Midpoint Sets]
    \label{def:delta_midpoint_set}
        Given a metric space $(X, d)$, $x, y \in X$, and $\delta \in (0, 1)$, we define the $\delta$-approximate midpoint set for $x$ and $y$, $\tn{Mid}(x, y, \delta)$, as
        \[
            \tn{Mid}(x, y, \delta) := \bigbrace{z \in X \ | \ \max\bigbrace{d(x, z), d(z, y)} \leq \frac{(1 + \delta)}{2}d(x, y)}.
        \]
    \end{definition}

    From here, we can begin to define asymptotic midpoint uniform convexity, and the characterization which will be most useful to us in this context.

    \begin{definition}[Asymptotic Midpoint Uniform Convexity (AMUC)]
    \label{def:amuc}
    A Banach space $X$ is said to be asymptotically midpoint uniformly convex if for every $\epsilon \in (0, 1)$, the quantity
    \[
        \Tilde{\delta}_X(\epsilon) = \inf_{x \in S_X}\sup_{Y \in \tn{cof}(X)}\inf_{y \in S_Y}\max \braces{\norm{x + \epsilon y}, \norm{x - \epsilon y}} - 1
    \]
    is greater than zero where $\Tilde{\delta}_X(\epsilon)$ is called the modulus of asymptotic midpoint uniform convexity.  We say that $X$ has power type $p$ modulus of asymptotic midpoint uniform convexity, or is $p$-AMUC, if there exists some $p \in (1, \infty)$ and $C_M > 0$ such that for every $\epsilon \in (0, 1)$, $\Tilde{\delta}_X(\epsilon) \geq C_M\epsilon^p$.
    \end{definition}
    
    It was shown in \cite{dilworth_amuc} that this is equivalent to the condition that 
    \[
        \lim_{\delta \to 0}\sup_{x \in S_x}\alpha(B_X(x, 1 + \delta) \cap B_X(-x, 1 + \delta)) = 0,
    \]
    where $\alpha(S)$, the Kuratowski measure of non-compactness, is the infimum of $d > 0$ such that $S$ can be covered by finitely many sets of diameter $d$.  Notice that if $\norm{x} = 1$ , then $B_X(x, 1 + \delta)\cap B_X(-x, 1 + \delta) = \tn{Mid}(x, -x, \delta)$.  In \cite{baudier_diamonds}, Lemma 4.2 gives a quantitative estimate of this value, stating that for every $\epsilon \in (0, 1)$ and $x \in X$ in an AMUC Banach space, we have
    \[
        \alpha(\tn{Mid}(-x, x, \Tilde{\delta}_X(\epsilon)/4) < 4\epsilon.
    \]
    In the case where $X$ is $p$-AMUC for some $p > 1$ and $C_M > 0$, then we have a more refined estimate for every $\epsilon \in (0, 1)$ (via a change in variables)
    \[
        \alpha(\tn{Mid}(-x, x, \epsilon)) < 4\bigp{\frac{4\epsilon}{C_M}}^{1/p}.
    \]

    The last result we will need from \cite{baudier_diamonds} (Lemma 4.3) states that the $\delta$-approximate midpoint set is a subset of a ``small" set.  Quantitatively, we repeat the lemma here, adding the specific case where $X$ has a power type $p$ modulus of asymptotic midpoint uniform convexity.

    \begin{lemma}[Small Midpoint Set Lemma]
    \label{lem:small_midpoint_set}
    Let $X$ be a Banach space which is asymptotically midpoint uniformly convex, then for every $\epsilon \in (0, 1)$ and $s, t \in X$, there exists a finite set $S \subset X$ such that
    \[
        \tn{Mid}(s, t, \Tilde{\delta}(\epsilon)/4) \subset S + 2\epsilon\norm{s - t}B_X.
    \]
    
    In the particular case where $X$ is $p$-AMUC for $p > 1$, we have
    \[
        \tn{Mid}(s, t, \epsilon) \subset S + 8\bigp{\frac{4\epsilon}{C_M}}^{1/p}\norm{s - t}B_X.
    \]
    \end{lemma}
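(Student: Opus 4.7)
The plan is to reduce the claim for arbitrary $s, t \in X$ to the symmetric case centered at the origin, where the quantitative Kuratowski estimate $\alpha(\tn{Mid}(-x, x, \Tilde{\delta}_X(\epsilon)/4)) < 4\epsilon$ (cited above from \cite{baudier_diamonds}) is already in hand. Set $m = (s+t)/2$ and $r = (t-s)/2$; if $\norm{r} = 0$ the midpoint set is a singleton and the conclusion is trivial, so assume $\norm{r} > 0$ and let $x = r/\norm{r}$. Unpacking \cref{def:delta_midpoint_set} and substituting $z = m + \norm{r}w$ yields the affine identity
\[
\tn{Mid}(s, t, \delta) = m + \tfrac{\norm{s - t}}{2}\,\tn{Mid}(-x, x, \delta),
\]
valid for every $\delta \in (0,1)$, which reduces the problem to producing a good finite cover of the symmetric midpoint set.

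Next, apply the Kuratowski estimate with parameter $\Tilde{\delta}_X(\epsilon)/4$ to cover $\tn{Mid}(-x, x, \Tilde{\delta}_X(\epsilon)/4)$ by finitely many subsets $A_1, \dots, A_N$, each of diameter strictly less than $4\epsilon$. Pick a representative $a_i \in A_i$ for each $i$; by the triangle inequality, $A_i \subset a_i + 4\epsilon B_X$, and hence $S_0 := \braces{a_1, \dots, a_N}$ satisfies
\[
\tn{Mid}(-x, x, \Tilde{\delta}_X(\epsilon)/4) \subset S_0 + 4\epsilon B_X.
\]
Transferring this inclusion back through the affine identity and setting $S := m + \tfrac{\norm{s-t}}{2}S_0$ (still finite), we obtain
\[
\tn{Mid}(s, t, \Tilde{\delta}_X(\epsilon)/4) \subset S + \tfrac{\norm{s-t}}{2}\cdot 4\epsilon B_X = S + 2\epsilon\norm{s-t}B_X,
\]
which is the AMUC conclusion. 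The $p$-AMUC case proceeds in the same way after substituting the sharper bound $\alpha(\tn{Mid}(-x, x, \epsilon)) < 4(4\epsilon/C_M)^{1/p}$ at the start; the identical rescaling then produces the stated ball radius.

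No step here presents a serious analytic obstacle: the affine reduction is a direct substitution, the Kuratowski bound is available by hypothesis, and the remainder is pure bookkeeping. The only care required is tracking the translation $m$ and the dilation factor $\tfrac{\norm{s-t}}{2}$ consistently — absorbing $m$ into the finite set $S$ while letting the dilation scale the radius of the ball — and confirming that the hypothesis indeed requires only a finite rather than compact covering set.
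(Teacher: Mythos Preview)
Your argument is correct. The paper does not actually supply a proof of this lemma; it is quoted from \cite{baudier_diamonds} (their Lemma~4.3), with the $p$-AMUC refinement added, so there is no in-paper proof to compare against. Your reduction via the affine identity $\tn{Mid}(s,t,\delta)=m+\tfrac{\norm{s-t}}{2}\,\tn{Mid}(-x,x,\delta)$ followed by the Kuratowski cover is exactly the natural route and matches the spirit of the cited result.

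One minor remark: in the $p$-AMUC case your rescaling actually yields radius $2\bigl(\tfrac{4\epsilon}{C_M}\bigr)^{1/p}\norm{s-t}$, not $8\bigl(\tfrac{4\epsilon}{C_M}\bigr)^{1/p}\norm{s-t}$, so ``produces the stated ball radius'' is slightly imprecise; of course the stated inclusion with the larger constant $8$ follows a fortiori, so the lemma as written is still established.
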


    Already with this lemma, we have the beginnings of a statement regarding well-embedded diamonds into an asymptotically midpoint uniformly convex Banach space $X$.  If each path from $s$ to $t$ in $D_1^\omega$ is embedded into $X$ with distortion at most $1 + \delta$, this lemma tells us that each of the midpoints of our graph must be of the very specific format given by \cref{lem:small_midpoint_set}, allowing us to find two elements of the midpoint set which must be close together.  With this in mind, we can now define a vertical $\epsilon$-diamond, creating our vertical $\delta$-umbel lemma equivalent.
    
    \subsubsection{$\epsilon$-Diamond Lemma}
    As mentioned previously, we will be looking this time for vertical $\epsilon$-diamonds which are defined similarly to vertical $\delta$-umbels.
    \begin{definition}[Vertical $\epsilon$-diamond]
    \label{def:vertical_delta_diamond}
    Let $(X, d)$ be an infinite metric space.  An $\epsilon$-diamond is a subspace $D = \braces{s, t, x_1, x_2, x_3, ...}$ of $X$ such that for some $\theta > 0$ and for every $i \in \n$
    \begin{align*}
        2\theta \leq d(s, t) &\leq 2(1 + \epsilon)\theta \\
        \theta \leq d(s, x_i) &\leq  (1 + \epsilon) \theta \\
        \theta \leq d(x_i, t) &\leq (1 + \epsilon)\theta
    \end{align*}
    or in other words, for every $i \in \n$ we have that $\braces{s, x_i, t}$ is $(1 + \epsilon)$-isomorphic (in the sense of distortion) to the metric space $P_2 = \braces{0, 1, 2}$ with the absolute value metric on it.
    \end{definition}

    With a definition of a vertical $\epsilon$-diamond, we are now in a position to define a similar \quotes{good} graph embedding lemma for vertical $\epsilon$-diamonds.

    \begin{lemma}[$\epsilon$-Diamond Lemma]
    \label{lem:delta_diamond_lemma}
    Let $D = \braces{s, t, x_1, x_2, ...}$ be a vertical $\epsilon$-diamond for $\epsilon \in (0, 1)$, and let $X$ be a $p$-AMUC space for some $p > 1$ and $C_M > 0$.  This implies that for some $i, j \in \n$ and $C > 0$ which depends only on $p$ and $C_M$,
    \[
        \norm{x_i - x_j} \leq C\epsilon^{1/p}\theta.
    \]
    \end{lemma}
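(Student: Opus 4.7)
The plan is to observe that each tip $x_i$ of a vertical $\epsilon$-diamond lies in the approximate midpoint set $\tn{Mid}(s, t, \epsilon')$ for some $\epsilon'$ comparable to $\epsilon$, and then to exploit the Small Midpoint Set Lemma (\cref{lem:small_midpoint_set}) together with a pigeonhole argument over the finite approximating set to produce two tips that are close. The $p$-AMUC hypothesis enters only through the quantitative version of the Small Midpoint Set Lemma; the rest of the argument is purely combinatorial.

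First I would verify that each $x_i$ is an approximate midpoint of $s$ and $t$. From the diamond inequalities $\theta \leq d(s, x_i), d(x_i, t) \leq (1 + \epsilon)\theta$ together with the lower bound $d(s,t) \geq 2\theta$, one gets
\[
    \max\{d(s, x_i), d(x_i, t)\} \leq (1 + \epsilon)\theta \leq \frac{(1 + \epsilon)}{2}d(s, t),
\]
so $x_i \in \tn{Mid}(s, t, \epsilon)$ for every $i \in \n$ by \cref{def:delta_midpoint_set}.

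Next I would apply the $p$-AMUC version of \cref{lem:small_midpoint_set}: there exists a finite set $S \subset X$ with
\[
    \tn{Mid}(s, t, \epsilon) \subset S + 8\bigp{\tfrac{4\epsilon}{C_M}}^{1/p}\norm{s - t}B_X.
\]
Using the upper bound $\norm{s - t} \leq 2(1 + \epsilon)\theta \leq 4\theta$, this becomes a cover of $\tn{Mid}(s, t, \epsilon)$ by finitely many balls of radius $32(4/C_M)^{1/p}\epsilon^{1/p}\theta$. Since the sequence $\braces{x_i}_{i \in \n}$ is infinite but $S$ is finite, the pigeonhole principle gives indices $i \neq j$ with $x_i, x_j$ in the same ball, yielding
\[
    \norm{x_i - x_j} \leq 64\bigp{\tfrac{4}{C_M}}^{1/p}\epsilon^{1/p}\theta,
\]
so $C = 64(4/C_M)^{1/p}$ works.

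There is no genuine obstacle here, since the $p$-AMUC machinery has already been packaged in \cref{lem:small_midpoint_set} in the right form; the only care needed is in confirming that the $(1 + \epsilon)$-slack in the definition of a vertical $\epsilon$-diamond is swallowed by the definition of $\tn{Mid}(s, t, \epsilon)$, and in tracking the constant factors coming from the bound on $\norm{s - t}$ so that the final dependence is indeed $C\epsilon^{1/p}\theta$ with $C$ depending only on $p$ and $C_M$.
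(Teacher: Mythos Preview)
Your proposal is correct and follows essentially the same approach as the paper: verify $x_i \in \tn{Mid}(s,t,\epsilon)$ via the lower bound $d(s,t) \geq 2\theta$, invoke the $p$-AMUC form of \cref{lem:small_midpoint_set}, pigeonhole over the finite set $S$, and bound $\norm{s-t}\leq 2(1+\epsilon)\theta \leq 4\theta$ to obtain the same constant $C = 64(4/C_M)^{1/p}$.
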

    \begin{proof}
        Observe that since $D$ is a vertical $\epsilon$-diamond, for every $i \in \n$, $x_i \in \tn{Mid}(s, t, \epsilon)$.  Next, since $X$ is $p$-AMUC, we can leverage \cref{lem:small_midpoint_set} to see that 
        \[
        \tn{Mid}(s, t, \epsilon) \subset S + 8\bigp{\frac{4\epsilon}{C_M}}^{1/p}\norm{s - t}B_X
        \]
        for some finite set $S = \braces{s_1, s_2, ..., s_n} \subset X$.  Observe now that since we have infinitely many $x_i \in \tn{Mid}(s, t, \epsilon)$, there must be at least two $x_i, x_j$ with $i \neq j$ such that $x_i = s_k + z_{x_i}$ and $x_j = s_k + z_{x_j}$ for $z_{x_i}, z_{x_j} \in 8\bigp{\frac{4\epsilon}{C_M}}^{1/p}\norm{s - t}B_X$.  Notice, however, that for these two elements, we must have
        \[
            \norm{x_i - x_j} = \norm{z_{x_i} - z_{x_j}} \leq 16\bigp{\frac{4\epsilon}{C_M}}^{1/p}\norm{s - t} \leq \frac{32(4)^{1/p}}{C_M^{1/p}}\epsilon^{1/p}(1 + \epsilon)\theta \leq \frac{64(4)^{1/p}}{C_M^{1/p}}\epsilon^{1/p}\theta,
        \]
        where the final inequality relies on the fact that $\norm{s - t} \leq 2\theta(1 + \epsilon)$ and $\epsilon \in (0, 1)$.
    \end{proof}

    \subsubsection{Diamond Coloring Lemma}
    We now turn our attention to the coloring lemma.
    
    First, we make some observations about the proof of the non-embeddability of the countably branching trees.  In that proof, we made very little usage of the level-respecting nature of the coloring on the subgraph.  We never used the fact that \textit{all} pairs of vertices at the same level were monochromatic.  What we really used was that for every umbel in the countably branching tree, the pairs of vertices at the same level within that umbel were monochromatic.  This is a substantially weaker assumption, allowing for umbels at the same level to be different colors.  This observation will guide our coloring lemma for diamonds.

    To begin, we need to define the vertical pairs for $D_n^\omega$.  Just as in the tree case, our definition will rely on the simple paths which begin at the \quotes{root} (in this case the vertex $s$) and end at a \quotes{leaf} (the vertex $t$).  With this in mind, we let a pair of vertices $(x, y) \in \tn{VP}(D_n^\omega)$ if $d(x, s) < d(y, s)$ and $x$ and $y$ are on the same simple $st$-path.  The \textit{level} of a vertex $x$ is given by $d(x, s)$.  

    Now, we must define what substructures of $D_n^\omega$ our coloring should respect.  Observe that for every $0 \leq i \leq n - 1$, by the associativity of $\varoslash$, we have that $D_n^\omega = D_{n - i + 1}^\omega \varoslash (D_1^\omega \varoslash D_i^\omega)$.  This implies that for every $e \in D_{n - i - 1}^\omega$, $e \varoslash D_1^\omega \varoslash D_{i}^\omega$ is graph isomorphic to $D_1^\omega \varoslash D_{i}^\omega$.  Notice that the copy of $D_1^\omega$ here is seen as having its edges replaced with copies of $D_{i}^\omega$, meaning that for every $k \in \n$, $d(s(D_1^\omega), x^k_{D_1^\omega}) = d( x^k_{D_1^\omega}, t(D_1^\omega)) = 2^{i}$, where $x^k_{D_1^\omega}$ refers to the $x_k$ vertex of our $D_1^\omega$ copy.  This is what we will refer to as a $2^i$\textit{-scaled diamond}.  For every choice of edges, this gives us a new and distinct $2^i$-scaled diamond, except for the corresponding $s$ and $t$ vertices.  Once we have colored $\tn{VP}(D_n^\omega)$, we will be asking for a subgraph $\mathfrak{D}$ that is graph isomorphic to $D_n^\omega$ and one for which every $2^i$-scaled diamond is \textit{horizontally monochromatic}, i.e., for every $2^i$-scaled diamond, $D_1^\omega$, and for every $(a, b), (x, y) \in \tn{VP}(D_1^\omega) \subset \tn{VP}(D_n^\omega)$ such that $d(a, s(D_n^\omega)) = d(x, s(D_n^\omega))$ and $d(b, s(D_n^\omega)) = d(y, s(D_n^\omega))$, $(a, b)$ and $(x, y)$ have the same color.

    In the particular case of $n = 1$, this is a very straightforward task.  Here, the only diamond is the entire graph.  As a result, if we color $\tn{VP}(D_1^\omega)$ with $c \in \n$ colors and color map $\chi: \tn{VP}(D_1^\omega) \to \braces{1, ..., c}$, then for every $st$-path, just as in the tree case, we can collect and order the pairs into a vector $(\chi((s, x_i)), \chi((x_i, t)), \chi((s, t)))$.  There are at most $c^3$ possible vectors, inducing a new coloring on the $st$-paths of $D_1^\omega$.  Since there are countably many $st$-paths and only $c^3$ colors on those paths, we can extract $\mathfrak{D} \subset D_1^\omega$ which is graph isomorphic to $D_1^\omega$, such that the $st$-paths of $\mathfrak{D}$ are monochromatic, giving us the result.  The general case for $D_n^\omega$ will follow in a very similar way, but we will run this argument for every $2^i$-scaled diamond, rather than all of $D_n^\omega$ at once.  In doing so, we will not just be removing vertices, but instead the entire copy of $D_i^\omega$ that is attached to the edges of that scaled diamond.
    
    \begin{lemma}[$D_n^\omega$ Coloring Lemma]
    \label{lem:diamond_coloring_lemma}
    Let $c \in \n$ and let $D_n^\omega$ be the $n$-th countably branching diamond graph.  Color the elements of $\tn{VP}(D_n^\omega)$ by $c$ colors.  Then there exists a $\mathfrak{D} \subset D_n^\omega$ that is graph isomorphic to $D_n^\omega$ such that for every $2^j$-scaled $D_1^\omega \subset \mathfrak{D}$ with $0 \leq j \leq n - 1$, $D_1^\omega$ is horizontally monochromatic.
    \end{lemma}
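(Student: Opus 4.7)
The plan is to proceed by strong induction on $n$. The base case $n = 1$ follows the argument sketched in the paragraph preceding the lemma: color each simple $st$-path in $D_1^\omega$ by the triple $(\chi(s, x_i), \chi(x_i, t), \chi(s, t))$ of $\chi$-colors of its three vertical pairs, giving at most $c^3$ path-colors, and apply the infinite pigeonhole principle to extract countably many $st$-paths sharing this triple. The resulting subgraph $\mathfrak{D} \cong D_1^\omega$ has its unique $2^0$-scaled $D_1^\omega$ (the whole graph) horizontally monochromatic.

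For the inductive step with $n \geq 2$, I would decompose $D_n^\omega = D_1^\omega \varoslash D_{n-1}^\omega$, write $\{x_i\}_{i \in \n}$ for the middle vertices of the outer $D_1^\omega$, and let $D_i^s, D_i^t$ denote the copies of $D_{n-1}^\omega$ replacing the edges $(s, x_i)$ and $(x_i, t)$. First, apply the inductive hypothesis to each $D_i^s$ and $D_i^t$ with the restriction of $\chi$ (still a coloring with $c$ colors) to obtain subgraphs $\mathfrak{D}_i^s \subseteq D_i^s$ and $\mathfrak{D}_i^t \subseteq D_i^t$, each graph isomorphic to $D_{n-1}^\omega$, in which every $2^j$-scaled $D_1^\omega$ with $0 \leq j \leq n-2$ is horizontally monochromatic. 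Second, apply infinite pigeonhole to the assignment $i \mapsto (\chi(s, x_i), \chi(x_i, t)) \in \{1, \dots, c\}^2$ to extract an infinite $I \subseteq \n$ on which both coordinates are constant. Define $\mathfrak{D}$ to be the subgraph spanned by $\{s, t\} \cup \{x_i : i \in I\} \cup \bigcup_{i \in I}\bigl(V(\mathfrak{D}_i^s) \cup V(\mathfrak{D}_i^t)\bigr)$, which is graph isomorphic to $D_n^\omega$ since $I$ is infinite and each $\mathfrak{D}_i^s, \mathfrak{D}_i^t \cong D_{n-1}^\omega$.

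It then remains to verify horizontal monochromaticity at every scale in $\mathfrak{D}$. The unique $2^{n-1}$-scaled $D_1^\omega$ in $\mathfrak{D}$ is its outer $D_1^\omega$, whose vertical pairs $\{(s, x_i)\}_{i \in I}$ and $\{(x_i, t)\}_{i \in I}$ have constant colors by the choice of $I$, while $(s,t)$ is a singleton pair. Every $2^j$-scaled $D_1^\omega \subseteq \mathfrak{D}$ with $j \leq n-2$ is confined to a single $\mathfrak{D}_i^s$ or $\mathfrak{D}_i^t$ by the iterated slash-product decomposition $D_n^\omega = D_1^\omega \varoslash (D_{n-j-2}^\omega \varoslash D_{j+1}^\omega)$, and so is horizontally monochromatic by the inductive hypothesis (the level-from-$s$ and level-from-$x_i$ orderings agree on $\mathfrak{D}_i^t$ up to an additive shift of $2^{n-1}$, which does not affect the notion of horizontal monochromaticity). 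The main subtlety is this last structural containment --- one must unpack the recursive slash product to confirm that no small scaled diamond can straddle the cut vertex $x_i$ between $D_i^s$ and $D_i^t$, essentially because any simple $st$-path realising both endpoints of such a diamond would have to detour through another $x_{i'}$ at a cost strictly greater than $2^{j+1}$. Once this containment is in hand, the combinatorial half (induction coupled with two pigeonhole applications) is routine.
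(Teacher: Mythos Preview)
Your argument is correct, but it is organised differently from the paper's. The paper proceeds \emph{top-down} through the scales: it first prunes the outer $2^{n-1}$-scaled $D_1^\omega$ to a horizontally monochromatic $\mathfrak{D}_1$, then for each surviving edge $e_i \in E(\mathfrak{D}_1)$ prunes the $2^{n-2}$-scaled diamond $e_i \varoslash D_1^\omega$, and iterates down to scale $2^0$, writing the result as $\mathfrak{D}_1 \varoslash \mathfrak{D}_2 \varoslash \cdots \varoslash \mathfrak{D}_n$. Your approach is \emph{bottom-up}: you invoke the inductive hypothesis on each $D_{n-1}^\omega$ copy to settle all scales $2^j$ with $j \leq n-2$ at once, and only afterwards apply pigeonhole at the top scale. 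Both rest on the same elementary idea (infinite pigeonhole on the finite tuple of pair-colours), and the structural containment you flag as the ``main subtlety'' --- that a $2^j$-scaled diamond with $j \leq n-2$ lies entirely inside one $D_{n-1}^\omega$ copy --- is exactly what the paper uses implicitly via the associativity of $\varoslash$. The paper's ordering is slightly more economical, since it never processes the $D_{n-1}^\omega$ copies that the outer pruning discards, whereas yours applies the inductive hypothesis to all of them before throwing most away; conversely, your inductive framing makes the verification at each step shorter. Either presentation is fine.
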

    \begin{proof}
        We will start from the largest scale and work down to the smallest scale.  Thus, we first consider $D_1^\omega \varoslash D_{n - 1}^\omega$.  This copy of $D_1^\omega$ is a $2^{n - 1}$-scaled diamond, so we can use the remarks prior to this lemma to extract a subdiamond $\mathfrak{D}_1 \subset D_1^\omega$ which is horizontally monochromatic. Since this is a countably branching diamond, the $s$ and $t$ vertices are not removed, and $\mathfrak{D}_1$ is graph isomorphic to $D_1^\omega$, we have that $\mathfrak{D}_1 \varoslash D_{n - 1}^\omega$ is graph isomorphic to $D_n^\omega$.  This handles the largest scale.

        From here, enumerate the edges of $\mathfrak{D}_1$ and let $e_i \in E(\mathfrak{D}_1)$. Consider $e_i \varoslash D_1^\omega \varoslash D_{n - 2}^\omega$.  This is a $2^{n - 2}$-scaled diamond, and following the steps from before, we extract a horizontally monochromatic subdiamond, call it $\mathfrak{D}_2^i$, which is graph isomorphic to $D_1^\omega$.  For the same reasons as before, this extraction process preserves the structure of $D_n^\omega$ and gives us that $\mathfrak{D}_1 \varoslash \mathfrak{D}_2 \varoslash D_{n - 2}^\omega$ is graph isomorphic to $D_n^\omega$, where $\mathfrak{D}_1 \varoslash \mathfrak{D}_2$ is understood to be replacement of every edge in $\mathfrak{D}_1$ with its corresponding $\mathfrak{D}_2^i$.  Continuing in this way, since there are only $n$ many scales, this process terminates and leaves us with the desired $\mathfrak{D} = \mathfrak{D}_1 \varoslash \mathfrak{D}_2 \varoslash ... \varoslash \mathfrak{D}_n$.
    \end{proof}

    \subsubsection{Generalized Path Lemma}
    As we move to the path lemma for the diamonds, there is a need to generalize Matoušek's original path lemma. Matoušek’s original proof for the path lemma only allowed for subpaths of length 2 and allowed for no control over how the subpath was chosen. The coloring lemma for diamonds only gives horizontally monochromatic $2^i$-scaled diamonds, not the entire graph.  As a result, when we look to apply the path lemma, we need to ensure that the subpath selected by the lemma lies along one of these $2^i$-scaled diamonds.  Observe that for any given $st$-path in $D_n^\omega$, $s = x_0 \to x_1 \to ... \to x_{2^{n}} = t$, if we consider the vertices on this path which are a multiple of $2^i$, this would give us $x_0, x_{2^i}, x_{2\cdot2^i},..., x_{2^{n - i}\cdot2^i}$.  Each of these vertices are elements of $V(D_{n - i - 1} \varoslash D_1^\omega)$ viewed as a subset of $V(D_{n - i - 1}^\omega \varoslash D_1^\omega \varoslash D_i^\omega)$.  Notice that this is true only because these paths start at $s$ and end at $t$, otherwise, we would not have this guarantee.  Thus, for every $1 \leq k \leq 2^{n - i} - 1$, we have that the edges $(x_{(k - 1)2^i}, x_{k2^i}), (x_{k2^i}, x_{(k + 1)2^i}) \in E(D_{n - i - 1}^\omega \varoslash D_1^\omega)$ are edges of $e_j \varoslash D_1^\omega$ for some $e_j \in E(D_{n - i - 1}^\omega)$.  Therefore, in the larger context of $D_{n - i - 1} \varoslash D_1^\omega \varoslash D_i^\omega$, we have that the $D_1^\omega$ here is a $2^i$-scaled diamond with $s(D_1^\omega) = x_{(k - 1)2^i}$ and $t(D_1^\omega) = x_{(k + 1)2^i}$.  The rest of the vertices for this $2^i$-scaled diamond are simply the remaining vertices from $e_j \varoslash D_1^\omega$.

    This gives us a candidate for how the generalized path lemma should go for $\varoslash$ powers of graphs.  We will need to be able to extract subpaths whose initial vertex is the $s$ vertex of some scaled subgraph and whose final vertex is the $t$ vertex of some scaled subgraph, while the distances between consecutive vertices in the subpath are the same.  In the lemma, the $A_{i + 1}$ subsets perform the role of picking the $s, t$ vertices, while the $A_i$ subsets are uniformly spaced subsets which collect the other vertices for our scaled subgraph copy.  Here, we will only show the result for subpaths of length two, but the methods we use here readily generalize to arbitrary subpath lengths.
    
    \begin{lemma}[Generalized Path Lemma]
    \label{lem:general_path_lemma}
    Let $p \geq 1$ and $C \geq 1$.  Let $f$ be a mapping from the path of length $n \geq 4$, $P_n = \braces{0, 1, 2, ..., n}$, with the metric $d_{P_n}(x, y) = |x - y|$ into a metric space $(X, d_X)$ such that for some $K \geq 1$ and $\lambda > 0$, if $x, y \in P_n$,
    \[
        \lambda d_{P_n}(x, y) \leq d_X(f(x), f(y)) \leq \lambda K d_{P_n}(x, y).
    \]

    For $i \in \braces{0, 1, ..., \floor{\log_2 n}}$, let $A_i := \braces{0, 2^i, 2 \cdot 2^i, 3\cdot 2^i..., \floor{\frac{n}{2^i}}2^i}$.  Define $L_{i}$ as a type of scaled Lipschitz norm of $f$ given by
    \[
        L_{i} = \sup_{x, y \in A_{i}, d_{P_n}(x, y) = 2^{i}}\frac{d_X(f(x), f(y))}{\lambda d_{P_n}(x, y)}.
    \]

    Then, there exists $i \in \braces{0, 1, ..., \floor{\log_2 n}}$ and a subset $Z = \braces{z_0, z_1, z_2}$ of $A_i$ with \\
    $d_{P_n}(z_j, z_{j + 1}) = 2^i$ and $z_0, z_2 \in A_{i + 1}$ such that for $B = \frac{K}{\floor{\log_2 n}L_{i + 1}}$
    \begin{align*}
        \lambda \bigp{1 - B}L_{i + 1}d_{P_n}(z_j, z_k) \leq d_X(f(z_j), f(z_k)) \leq \lambda \bigp{1 + B}L_{i + 1}d_{P_n}(z_j, z_k).
    \end{align*}

    In particular, if $n \geq 2^{\ceil{2CK^p}}$, then there is the refined estimate
    \begin{align*}
        \lambda \bigp{1 - \frac{1}{2CL_{i + 1}^p}}L_{i + 1}d_{P_n}(z_j, z_k) \leq d_X(f(z_j), f(z_k)) \leq \lambda \bigp{1 + \frac{1}{2CL_{i + 1}^p}}L_{i + 1}d_{P_n}(z_j, z_k),
    \end{align*}
    with $\dist(f_{|Z}) \leq 1 + \frac{2}{CL_{i + 1}^p}$.
    \end{lemma}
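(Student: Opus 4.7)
The plan is to mimic the proof of the original Path Lemma (\cref{lem:path_lemma}), with the sequence $\{L_i\}_{i=0}^{\floor{\log_2 n}}$ taken over the restricted sets $A_i$ rather than all of $P_n$. The main work is to verify that this restriction does not break the monotonicity argument, and then to check that the triple produced by the sequence lemma automatically lies in the correct $A_i$ and $A_{i+1}$ sets.

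First, I would verify that $\{L_i\}_{i=0}^{\floor{\log_2 n}}$ is a decreasing sequence in $[1,K]$. The upper bound $L_i \leq K$ and lower bound $L_i \geq 1$ are immediate from the bi-Lipschitz-like assumption on $f$. For monotonicity, note that if $x,y \in A_{i+1}$ with $d_{P_n}(x,y) = 2^{i+1}$, then $A_{i+1} \subset A_i$ and the midpoint $m = (x+y)/2$ lies in $A_i$, so the triangle inequality gives
\[
d_X(f(x),f(y)) \leq d_X(f(x),f(m)) + d_X(f(m),f(y)) \leq 2^{i+1}\lambda L_i,
\]
so $L_{i+1} \leq L_i$. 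This is the only place where the additive structure of $A_i$ matters, and it is precisely the reason the restriction to $A_{i+1}$ for the outer endpoints causes no problems.

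Next, I would apply \cref{lem:sequence_lemma} to $\{L_i\}$ to extract $i \in \{0,1,\ldots,\floor{\log_2 n}\}$ with $1 \leq L_i/L_{i+1} \leq 1 + K/(\floor{\log_2 n}L_{i+1})$. Then I would pick $z_0 = v$ and $z_2 = v+2^{i+1}$ in $A_{i+1}$ which attain the supremum $L_{i+1}$, and set $z_1 = v + 2^i$. Since $v \in A_{i+1} \subset A_i$, also $z_1, z_2 \in A_i$, so $Z \subset A_i$ with $z_0, z_2 \in A_{i+1}$ as required. The upper bound on $d_X(f(z_j), f(z_{j+1}))$ follows from $L_i \leq (1+B)L_{i+1}$ applied to pairs at distance $2^i$ in $A_i$, and the lower bound for the two halves follows from the reverse triangle inequality applied to the optimal pair $(z_0,z_2)$ realizing $L_{i+1}$ minus the upper bound on the other half. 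The bound on the full pair $(z_0,z_2)$ is just the defining equality for $L_{i+1}$.

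Finally, for the refined estimate, I would observe that $n \geq 2^{\ceil{2CK^p}}$ gives $\floor{\log_2 n} \geq 2CK^p$, which implies $B = K/(\floor{\log_2 n}L_{i+1}) \leq 1/(2CL_{i+1}^{p-1}K^{p-1}) \cdot K^{p-1}/K^{p-1} = 1/(2CL_{i+1}^p) \cdot L_{i+1}^{p-1}/K^{p-1} \leq 1/(2CL_{i+1}^p)$, using $L_{i+1} \leq K$ and $p \geq 1$. The distortion bound then follows exactly as in the original path lemma using $(1+x)/(1-x) \leq 1+4x$ for $x \in [0,1/2]$. I do not anticipate any real obstacle here: once monotonicity of $L_i$ under the restriction to $A_i$ is secured, the whole proof is a direct transcription of \cref{lem:path_lemma}.
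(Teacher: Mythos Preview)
Your proposal is correct and follows essentially the same approach as the paper's proof: establish monotonicity of $\{L_i\}$ via the containment $A_{i+1}\subset A_i$ and the midpoint argument, apply the Sequence Lemma, choose $z_0,z_2\in A_{i+1}$ realizing $L_{i+1}$ with $z_1$ the midpoint in $A_i$, and then derive the upper and lower bounds exactly as in \cref{lem:path_lemma} using the triangle and reverse triangle inequalities. The refined estimate and distortion bound are handled identically as well.
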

    \begin{proof}
        Observe that $K \geq L_i \geq L_{i + 1} \geq 1$ for all $i \in \braces{0, 1, 2, ..., \floor{\log_2 n}}$.  This can be seen by applying the triangle inequality to $L_{i + 1}$ as in the basic path lemma and utilizing the fact that $A_i \supset A_{i + 1}$. Thus, we can now apply the Sequence Lemma to $\braces{L_i}_{i = 0}^{\floor{\log_2 n}}$ to see that there exists $i \in \braces{0, 1, 2, ..., \floor{\log_2 n} - 1}$ such that
        \[
            1 \leq \frac{L_i}{L_{i + 1}} \leq 1 + \frac{K}{\floor{\log_2 n}L_{i + 1}}
        \]
        Once again, for the sake of space and notation, let $B = \frac{K}{\floor{\log_2 n}L_{i + 1}}$ for the remainder of the proof.
        
        Now fix $z_0, z_2 \in A_{i + 1}$ such that $L_{i + 1}$ is attained on them, i.e., $d_X(f(z_0), f(z_2)) = \lambda 2^{i + 1}L_{i + 1}$.  By the definition of $A_{i + 1}$, we also have $Z = \braces{z_0, z_1, z_2} \subset A_i$ with the property that $d_{P_n}(z_j, z_{j + 1}) = 2^i$.  Notice as well that $d_{P_n}(z_k, z_j) = \abs{k - j}2^i$.  From here, we need to demonstrate upper and lower bounds on $d_X(f(z_j), f(z_k))$ for $0 \leq j < k \leq 2$ and use these bounds to achieve a distortion bound on the entire subpath.  We begin by getting the upper bound which will be used to show the lower bound.
    
        First, consider for $j < k$ by  using the triangle inequality and bounds given by the Sequence Lemma
        \begin{align}
        \label{eq:upperbound_gen_path}
            d_X(f(z_j), f(z_k)) &\leq \sum_{m = j}^{k - 1}d_X(f(z_m), f(z_{m + 1})) \nonumber \\
                                &\leq \sum_{m = j}^{k - 1}\lambda 2^i L_i \nonumber \\
                                &\leq \lambda\bigp{1 + B}L_{i + 1}(k - j)2^i  \\
                                &= \lambda \bigp{1 + B}L_{i + 1}d_{P_n}(z_j, z_k). \nonumber
        \end{align}
    
        Once again with $j < k$, for the lower bound, we can see by the reverse triangle inequality and  \cref{eq:upperbound_gen_path} in the proof of the upper bound
        \begin{align*}
            d_X(f(z_j), f(z_k)) &\geq d_X(f(z_0), f(x_2)) - d_X(f(z_0), f(z_j)) - d_X(f(z_k), f(z_2))  \\
                                &= \lambda 2^{i + 1}L_{i + 1} - d_X(f(z_0), f(z_j)) - d_X(f(z_k), f(z_2))  \\
                                &\geq \lambda 2^{i + 1}L_{i + 1} - \lambda j 2^i(1 + B)L_{i + 1} - \lambda(2 - k)2^i(1 + B)L_{i + 1}  \\
                                &= \lambda 2^i L_{i + 1}\bracks{2 - j (1 + B) - (2 -k) (1 + B)} \\
                                &= \lambda 2^i L_{i + 1}\bracks{(k - j) - B(2 - (k - j))} \\
                                &= \lambda(k - j) 2^iL_{i + 1}\bigp{1 - B\frac{2 - (k - j)}{k - j}}.
        \end{align*}

        Now, in order to ensure that there is no dependency on $j$ or $k$ in $1 - B\frac{2 - (k - j)}{k - j}$, we utilize the fact that for $x \geq 1$, $\frac{2 - x}{x} \leq 1$ to get the bound
        \begin{align*}
            d_X(f(z_j), f(z_k)) &\geq  \lambda\bigp{1 - B}L_{i + 1}(k - j) 2^i \\
            &= \lambda \bigp{1 - B}L_{i + 1}d_{P_n}(z_j, z_k).
        \end{align*}

        This now gives us the first statement of the lemma
        \[
            \lambda \bigp{1 - B}L_{i + 1}d_{P_n}(z_j, z_k) \leq d_X(f(z_j), f(z_k)) \leq \lambda \bigp{1 + B}L_{i + 1}d_{P_n}(z_j, z_k).
        \]

        In addition, if we wish to get a distortion bound, then we must consider
        \[
            \sup_{z_j, z_k \in A_i}\frac{d_X(f(z_j), f(z_k))}{\lambda d_{P_n}(z_j, z_k)} \sup_{z_j, z_k \in A_i}\frac{\lambda d_{P_n}(z_j, z_k)}{d_X(f(z_j), f(z_k))}.
        \]
        By the upper bound, however, we can already achieve a bound for the first supremum,
        \[
            \sup_{z_j, z_k \in A_i}\frac{d_X(f(z_j), f(z_k))}{\lambda d_{P_n}(z_j, z_k)} \leq \bigp{1 + B}L_{i + 1}.
        \]
        The bound for the second supremum, however, requires a bit more precision.  Notice that the quantity $\bracks{1 - B}$ is not necessarily positive.  It is entirely possible that $B$ is sufficiently large to cause a contradiction with the supremum being strictly positive by definition of $f$, so we must apply constraints to $B$ in order to proceed.  For pedagogical purposes, however, it will be more clear to see these constraints by continuing the proof and assuming a $B$ that works.

        In a similar way to the first supremum, the bound for the second supremum is achieved by using the lower bound we proved earlier,
        \[
            \sup_{z_j, z_k \in A_i}\frac{\lambda d_{P_n}(z_j, z_k)}{d_X(f(z_j), f(z_k))} \leq \frac{1}{\bigp{1 - B}L_{i + 1}}.
        \]
        Putting these bounds together gives us
        \[
            \sup_{z_j, z_k \in A_i}\frac{d_X(f(z_j), f(z_k))}{\lambda d_{P_n}(z_j, z_k)} \sup_{z_j, z_k \in A_i}\frac{\lambda d_{P_n}(z_j, z_k)}{d_X(f(z_j), f(z_k))} \leq \frac{1 + B}{1 - B}.
        \]
        Just as with the original Path Lemma, we want the distortion to be of the form $1 + \delta$ for some $\delta > 0$.  We can achieve this by recognizing that for $0 \leq x \leq \frac{1}{2}$, $\frac{1 + x}{1 - x} \leq 1 + 4x$, so we see
        \[
            \frac{1 + B}{1 - B} \leq 1 + 4B.
        \]
        Since the terms of $B$ are all positive if $n \geq 2$, the only interesting constraint is forcing $B \leq \frac{1}{2}$.  In this case, we can exercise control over $B$ by adding a requirement to the length $n$ of our path $P_n$, and this constraint is achieved exactly when $n \geq 2^{\ceil{2CK^p}}$ for any $C, p \geq 1$.  In the case where we have this lower bound for $n$, then our upper and lower bounds from earlier are improved to
        \begin{align*}
             \lambda \bigp{1 - \frac{1}{2CL_{i + 1}^p}}L_{i + 1}d_{P_n}(z_j, z_k) \leq d_X(f(z_j), f(z_k)) \leq \lambda \bigp{1 + \frac{1}{2CL_{i + 1}^p}}L_{i + 1}d_{P_n}(z_j, z_k).
        \end{align*}

        Using our distortion estimates from above in terms of $B$, we now have the distortion estimate stated in the lemma, 
        \[
            \dist(f_{|Z}) \leq 1 + \frac{4K}{\floor{\log_2 n}L_{i + 1}} \leq 1 + \frac{2}{CL_{i + 1}^p} \leq 1 + \frac{2}{CL_{i + 1}^p}
        \]
    \end{proof}

    \subsection{Embedding Obstruction Theorem}
    \begin{theorem}[Lower Bound for Embedding of Countably Branching Diamonds]
    \label{thm:embedding_obstruction_diamonds}
    Let $X$ be a Banach space which is $p$-AMUC for some $p > 1$ and $C_M > 0$.  Then the minimum distortion necessary for embedding $D_n^\omega$ for sufficiently large $n$ is at least $Cn^{1/p}$ for some $C > 0$, dependent only on $C_M$ and $p$.
    \end{theorem}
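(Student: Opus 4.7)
The plan is to mirror the argument from \cref{thm:infrasup_p_umbel_lower_bound_infinite_trees}, swapping the tree-specific ingredients for the three diamond analogues developed above. Suppose $f \colon D_n^\omega \to X$ is an embedding with distortion $K$, rescaled so that the lower Lipschitz constant is $\lambda = 1$. We fix a parameter $\gamma > 0$, of order $1/K^p$ and depending only on $p$ and $C_M$, and color each vertical pair $(u,v) \in \tn{VP}(D_n^\omega)$ by
\[
    \bfloor{\log_{1+\gamma}\bigp{\frac{d_X(f(u), f(v))}{d_{D_n^\omega}(u,v)}}} \in \braces{0, 1, \ldots, \ceil{\log_{1+\gamma}K}}.
\]
The diamond coloring lemma (\cref{lem:diamond_coloring_lemma}) then returns a subgraph $\mathfrak{D} \subset D_n^\omega$ graph-isomorphic to $D_n^\omega$ in which every $2^i$-scaled copy of $D_1^\omega$ is horizontally monochromatic.

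Next we fix an arbitrary $st$-path $\pi$ in $\mathfrak{D}$, which is isometric to $P_{2^n}$. We may assume $K < C'n^{1/p}$ for some constant $C'$ depending only on $p$ and $C_M$ (otherwise the theorem holds trivially), so that $2^n$ exceeds the threshold $2^{\ceil{2CK^p}}$ required by the refined version of the generalized path lemma (\cref{lem:general_path_lemma}). Applying that lemma to $f|_\pi$ produces a scale $i \in \braces{0, 1, \ldots, n-1}$ and a triple $Z = \braces{z_0, z_1, z_2} \subset A_i$ with $z_0, z_2 \in A_{i+1}$, $d(z_j, z_{j+1}) = 2^i$, and distortion at most $1 + \frac{2}{CL_{i+1}^p}$. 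The key structural observation, noted in the discussion preceding \cref{lem:general_path_lemma}, is that because $z_0$ and $z_2$ are consecutive elements of $A_{i+1}$ along an $st$-path, they must be the $s$ and $t$ vertices of some $2^i$-scaled copy of $D_1^\omega$ inside $\mathfrak{D}$, and $z_1$ is one of the countably many branching tips of that $D_1^\omega$.

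We enumerate the remaining branching tips of this $2^i$-scaled diamond as $y_2, y_3, \ldots$; each satisfies $d_{D_n^\omega}(z_0, y_j) = d_{D_n^\omega}(y_j, z_2) = 2^i$ and $d_{D_n^\omega}(y_a, y_b) = 2^{i+1}$ for $a \neq b$. By horizontal monochromaticity, $(z_0, y_j)$ carries the same color as $(z_0, z_1)$ and $(y_j, z_2)$ the same color as $(z_1, z_2)$. Just as in the tree proof, this propagates the upper and lower bounds from $Z$ to every triple $\braces{f(z_0), f(z_2), f(y_j)}$ at the cost of two factors of $(1+\gamma)$; with $\gamma$ of the form $\alpha/K^p$ for a suitable $\alpha$ and $\theta$ chosen proportional to $L_{i+1}2^i$, the family
\[
    \braces{f(z_0), f(z_2), f(z_1), f(y_2), f(y_3), \ldots}
\]
becomes a vertical $\epsilon$-diamond (\cref{def:vertical_delta_diamond}) with $\epsilon$ of order $1/L_{i+1}^p$.

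The $\epsilon$-diamond lemma (\cref{lem:delta_diamond_lemma}) now yields two tips $y_a, y_b$ with $\norm{f(y_a) - f(y_b)} \leq C''\epsilon^{1/p}\theta$, which by the choices above is a constant multiple of $2^i$. On the other hand, the lower Lipschitz bound forces $\norm{f(y_a) - f(y_b)} \geq d_{D_n^\omega}(y_a, y_b) = 2^{i+1}$, and once the constants are tuned so that the upper estimate is strictly smaller than $2^{i+1}$ we obtain a contradiction that delivers $K \geq C'n^{1/p}$. The most delicate step, as in the tree case, will be the constant bookkeeping at the very end: we must pick $\alpha$, $C$, and $\theta$ so that the two color-transfer inflations of $(1+\gamma)$ are absorbed into the target $1+\epsilon$ estimate, while the factor $C_M^{-1/p}$ coming from \cref{lem:delta_diamond_lemma} is beaten by our choice of $\epsilon$. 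This parallels the choice $C = 14(6C_U)^p$ in the tree proof and should go through by an essentially identical computation.
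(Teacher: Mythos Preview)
Your proposal is correct and follows essentially the same route as the paper's proof: the same log-distortion coloring, the same application of \cref{lem:diamond_coloring_lemma} and \cref{lem:general_path_lemma}, the same structural observation that $z_0,z_2$ are the $s,t$ vertices of a $2^i$-scaled diamond, and the same endgame via \cref{lem:delta_diamond_lemma}. The only difference is that the paper carries out the constant bookkeeping explicitly (choosing $C = 56(64)^p/C_M$, $\gamma = 2/(CK^p)$, etc.), which you correctly identify as the remaining routine step.
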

    \begin{proof}
        Let $n$ be sufficiently large and suppose that $D_n^\omega$ embeds into $X$ with Lipschitz map $f: D_n^\omega \to X$ such that for some $\lambda > 0$ and for every $x, y \in D_n^\omega$,
        \[
            \lambda d_{D_n^\omega}(x, y) \leq d_X(f(x), f(y)) \leq \lambda K d_{D_n^\omega}(x, y),
        \]
        where $d_X(x, y) = \norm{x - y}$.  We choose this notation instead of the norm to highlight the lack of dependency on the linear structure of our space and to facilitate the move to metric spaces in later parts.  Observe that by rescaling the metric, we may assume that $\lambda = 1$.

        Just as before, this proof will have four main parts.  First, using the diamond coloring lemma (\cref{lem:diamond_coloring_lemma}), we will show that there exists a $\mathfrak{D} \subset D_n^\omega$ which is graphically isomorphic to $D_n^\omega$ such that every $2^i$-scaled $D_1^\omega$ in $\mathfrak{D}$ is horizontally monochromatic under coloring by the log distortion of pairs of vertices in $\tn{VP}(D_n^\omega)$.  Second, we will utilize the generalized path lemma (\cref{lem:general_path_lemma}) to extract a well-embedded subpath.  Third, we will leverage our coloring on $\mathfrak{D}$ and the well embedded subpath to construct a vertical $\epsilon$-diamond in $X$.  Finally, we will then apply our vertical $\epsilon$-diamond lemma (\cref{lem:delta_diamond_lemma}) to realize a contradiction to the conditions necessary for the construction of the vertical $\epsilon$-diamond to get our result.

        Let $\gamma > 0$ be a parameter to be chosen later.  Set $r = \ceil{\log_{1 + \gamma}K}$ and give $\braces{x, y} \in \tn{VP}(D_n^\omega)$ the color
        \[
            \bfloor{\log_{1 + \gamma}\bigp{\frac{d_X(f(x), f(y))}{ d_{D_n^\omega}(x, y)}}} \in \braces{0, 1, ..., r}.
        \]
        By the diamond coloring lemma (\cref{lem:diamond_coloring_lemma}), there exists $\mathfrak{D} \subset D_n^\omega$ whose vertically faithful $2^i$-scaled copies of $D_1^\omega$ are monochromatic.

        Now, we pick an arbitrary $st$-path in $\mathfrak{D}$ and apply the generalized path lemma with the sets $A_i = \braces{0, 2^i, 2\cdot2^{i}, 3\cdot2^i, ..., 2^n}$ for $0 \leq i \leq n$.  This gives us, for some $i$, a set of vertices $Z = \braces{s_{D_1^\omega}, x_1, t_{D_1^\omega}}$ that correspond to the $s$, $t$, and midpoint vertex of a $2^i$-scaled copy of $D_1^\omega$ in $\mathfrak{D}$ with the property that for $x, y \in Z$
        \begin{align*}
            L_{i + 1}\bigp{1 - \frac{K}{nL_{i + 1}}}d_{D_n^\omega}(x, y) &\leq d_X(f(x), f(y))) \leq L_{i + 1}\bigp{1 + \frac{K}{nL_{i + 1}}}d_{D_n^\omega}(x, y).
        \end{align*}
        (Notice that since the length of the $st$-path is $2^n$, the $\log$ term simplifies to just an $n$.)

        From here, we wish to create a vertical $\epsilon$-diamond in $X$ for choices of $\epsilon$, $\gamma$, and $\theta$.  To this end, we have, as before, the following claim.

        \begin{claim}
            If 
            \begin{align*}
                \gamma = \frac{2}{CK^p}, \quad
                \theta = \frac{L_{i + 1}(1 - \frac{1}{2CL_{i + 1}^p})2^i}{1 + \frac{2}{CK^p}}, \quad
                \epsilon = \frac{14}{CL_{i + 1}^p},
            \end{align*}
            and $2^n \geq 2^{4CK^p} \geq 2^{\ceil{2CK^p}}$ where $C = \frac{(56)(64)^p}{C_M}$ if $C_M \leq 28(64)^p$ or $C = (56)(64)^p$ otherwise, then there exists $x_2, x_3, ... \in \mathfrak{D}$ such that $\braces{s, t, x_1, x_2, ...}$ is a $2^i$-scaled copy of $D_1^\omega$ whose image in $X$ is a vertical $\epsilon$-diamond.
        \end{claim}

        Assuming the claim, we show how to proceed.  If we have that $2^n < 2^{4CK^p}$, then $K > \bigp{\frac{n}{4C}}^{1/p}$ and we are done.  Otherwise, $2^n \geq 2^{4CK^p}$, so we can use the claim.  This gives us a vertical $\epsilon$-diamond, so we can apply the $\epsilon$-diamond lemma while utilizing that $d_{D_n^\omega}(x_i, x_j) = 2^{i + 1}$ because the $x_i, x_j$ are in a $2^i$-scaled copy of $D_1^\omega$ to get for $C_M \leq 28(64)^p$
        \begin{align*}
            2^{i + 1} \leq \inf_{i \neq j}d_X(f(x_i), f(x_j)) \leq \frac{64(4)^{1/p}}{C_M^{1/p}}\epsilon^{1/p}\theta = \frac{64(4)^{1/p}}{C_M^{1/p}}\frac{C_M^{1/p}}{64(4)^{1/p}L_{i + 1}}\theta = \frac{\theta}{L_{i + 1}} \leq 2^i.
        \end{align*}
        Similarly, if $C_M > 28(64)^p$, then we disregard it in the constant $C$ and we remove it from the chain of inequalities above, making everything bigger, but maintaining the contradiction.
        \end{proof}
        \begin{proof}[Proof of claim]
        Just as before, we will only give values for constants at the very end.  For now, if $2^n \geq 2^{\ceil{2CK^p}}$ for some $C \geq 1$ to be determined later and $p > 1$, then the generalized path lemma improves to give us
        \begin{align*}
            \bigp{1 - \frac{1}{2CL^p_{i + 1}}}L_{i + 1}2^i &\leq d_X(f(s_{D_1^\omega}), f(x_1)) \leq \bigp{1 + \frac{1}{2CL_{i + 1}^p}}L_{i + 1}2^i \\
            \bigp{1 - \frac{1}{2CL^p_{i + 1}}}L_{i + 1}2^i &\leq d_X(f(x_1), f(t_{D_1^\omega})) \leq \bigp{1 + \frac{1}{2CL_{i + 1}^p}}L_{i + 1}2^i \\
            \bigp{1 - \frac{1}{2CL^p_{i + 1}}}L_{i + 1}2^{i + 1} &\leq d_X(f(s_{D_1^\omega}), f(t_{D_1^\omega})) \leq \bigp{1 + \frac{1}{2CL_{i + 1}^p}}L_{i + 1}2^{i + 1}.
        \end{align*}

        We will only work with the vertices $s_{D_1^\omega}$ and $x_1$, but the argument holds for the other vertices as well.  By the horizontally monochromatic property of our $2^i$-scaled copy of $D_1^\omega$, we have for every $j \geq 2$, the existence of $x_j$ vertices such that 
        \begin{align*}
            \bfloor{\log_{1 + \gamma}\bigp{\frac{d_X(f(s_{D_1^\omega}), f(x_1))}{2^i}}} = \bfloor{\log_{1 + \gamma}\bigp{\frac{d_X(f(s_{D_1^\omega}), f(x_j))}{2^i}}}.
        \end{align*}
        Applying the same logic as the tree case, this eventually gives us by utilizing the bound given by the generalized path lemma and the properties of the logarithm
        \begin{align*}
            \frac{L_{i + 1}\bigp{1 - \frac{1}{2CL_{i + 1}^p}}}{1 + \gamma}2^i \leq d_X(f(s_{D_1^\omega}), f(x_j)) \leq (1 + \gamma)L_{i + 1}\bigp{1 + \frac{1}{2CL_{i + 1}^p}}2^i.
        \end{align*}
        Thus, moving from one midpoint of the diamond to another only incurs a $1 + \gamma$ penalty.  We now wish to rewrite this inequality in a form that is conducive to creating a vertical $\epsilon$-diamond.  Here, we choose
        \[
            \theta = \frac{L_{i + 1}(1 - \frac{1}{2CL_{i + 1}^p})2^i}{1 + \gamma}.
        \]
        This allows us, after utilizing that $\frac{1 + x}{1 - x} \leq 1 + 4x$ for $0 \leq x \leq \frac{1}{2}$, to rewrite this as 
        \[
            \theta \leq d_X(f(s_{D_1^\omega}), f(x_j)) \leq \theta(1 + \gamma)^2\bigp{1 + \frac{2}{CL_{i + 1}^p}}.
        \]

        Now, our distortion is $(1 + \gamma)^2\bigp{1 + \frac{2}{CL_{i + 1}^p}}$, but we need something of the form $1 + \epsilon$.  To this end, we begin by choosing $\gamma = \frac{2}{CK^p} \leq \frac{2}{CL_{i + 1}^p}$ and want to utilize the fact that $(1 + x)^3 \leq 1 + 7x$ for $0 \leq x \leq 1$.  This motivates our choice of $C$ which we will choose so that $\frac{2}{CL_{i + 1}^p} \leq 1$.  Notice that because $L_{i + 1}^p \geq 1$, our $C$ is independent of all choices made thus far.   As a result, any $C \geq 2$ will suffice, however, we also want to cancel with the constants in the $\epsilon$-diamond lemma, so we will choose $C = \frac{56(64)^p}{C_M}$ if $C_M \leq 28(64)^p$.  If not, then we simply omit $C_M^{1/p}$.  With this in mind, we then have our final estimate for every $i \in \n$
        \begin{align*}
            \theta &\leq d_X(f(s_{D_1^\omega}), f(x_i)) \leq \theta\bigp{1 + \frac{14}{CL_{i + 1}^p}} \\
            \theta &\leq d_X(f(x_i), f(t_{D_1^\omega})) \leq \theta\bigp{1 + \frac{14}{CL_{i + 1}^p}} \\
            2\theta &\leq d_X(f(s_{D_1^\omega}), f(t_{D_1^\omega})) \leq 2\theta\bigp{1 + \frac{14}{CL_{i + 1}^p}}.
        \end{align*}
        \end{proof}

        Notice that we once again do not utilize any Banach structure in the final proof of the embedding obstruction theorem. As a result, we can redo the vertical $\epsilon$-diamond lemma only assuming a metric inequality.  
        \begin{lemma}[Metric Inequality $\epsilon$-diamond]
        \label{lem:infrasup_p_diamond_delta_diamond}
        Let $(X, d)$ be a metric space such that for every $s, t, x_1, x_2, ... \in X$ we have
        \begin{equation}
            \label{def:infrasup_p_diamond_inequality}
            \frac{d(s, t)^p}{2^p} + \frac{\inf_{i \neq j}d(x_i, x_j)^p}{C_D^p} \leq \max\bigp{\sup_id(s, x_i)^p, \sup_id(t, x_i)^p}.
        \end{equation}
        for some $p \geq 1$ and $C_D > 0$. Let $F = \braces{s, t, x_1, ...}$ be a vertical $\epsilon$-diamond for some $\epsilon \in (0, 1)$ and $\theta > 0$, then 
        \[
            \inf_{i \neq j}d(x_i, x_j)^p \leq 6C_D\theta\epsilon^{1/p}.
        \]
        \end{lemma}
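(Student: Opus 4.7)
The plan is to mimic almost verbatim the proof of the Infrasup Umbel Inequality Lemma (\cref{lem:infrasup_umbel_inequality_lemma}), since the structure of a vertical $\epsilon$-diamond is analogous to a vertical $\delta$-umbel, just with the three ``core'' distance bounds rearranged to match the three non-trivial terms of the diamond inequality rather than the umbel inequality.

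First, I would feed the three defining inequalities of the vertical $\epsilon$-diamond into the right places of \cref{def:infrasup_p_diamond_inequality}. From $d(s,t) \geq 2\theta$, I get $d(s,t)^p/2^p \geq \theta^p$, providing a lower bound on the first term. From $d(s,x_i), d(t,x_i) \leq (1+\epsilon)\theta$ for every $i$, I get $\max(\sup_i d(s,x_i)^p,\sup_i d(t,x_i)^p) \leq (1+\epsilon)^p\theta^p$, providing an upper bound on the right-hand side. Substituting and rearranging in \cref{def:infrasup_p_diamond_inequality} gives
\[
    \frac{\inf_{i \neq j} d(x_i,x_j)^p}{C_D^p} \leq \bigl[(1+\epsilon)^p - 1\bigr]\theta^p.
\]

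Next, I would take $p$-th roots and bound the factor $[(1+\epsilon)^p - 1]^{1/p}$ by $6\epsilon^{1/p}$ exactly as in the umbel lemma: by Taylor expanding $(1+\epsilon)^p$ around $\epsilon = 0$, using $\epsilon < 1$ to bound the quadratic remainder $(1+\xi)^{p-2} \leq 2^{p-2}$, and then applying subadditivity of $x \mapsto x^{1/p}$ for $p \geq 1$ to split the sum. This yields $\inf_{i\neq j} d(x_i,x_j) \leq 6 C_D \theta \epsilon^{1/p}$ as claimed (noting that the exponent $p$ on $d(x_i,x_j)^p$ in the lemma statement appears to be a typo, since the estimate only makes dimensional sense without it, and the corresponding umbel lemma is stated without the power).

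There is essentially no obstacle here: the entire argument is a direct analogue of \cref{lem:infrasup_umbel_inequality_lemma}. The only thing worth double-checking is that the $\epsilon$-diamond inequalities really do match up with the structure of the infrasup $p$-diamond inequality, i.e.\ that the two terms being controlled by $\max$ are exactly the $s$-to-tip and $t$-to-tip distances (which they are, by definition), while the $s$-to-$t$ distance appears with the factor $1/2^p$ on the left. The $\theta^p$ lower bound on that term is what forces the tip-spread to be small, playing the same role as the $w$-to-tip lower bound did in the umbel case.
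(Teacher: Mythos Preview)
Your proposal is correct and follows essentially the same approach as the paper: feed the $\epsilon$-diamond bounds into the diamond inequality to obtain $\inf_{i\neq j}d(x_i,x_j)^p/C_D^p \leq [(1+\epsilon)^p-1]\theta^p$, then take $p$-th roots and invoke the same Taylor-expansion estimate $[(1+\epsilon)^p-1]^{1/p}\leq 6\epsilon^{1/p}$ from \cref{lem:infrasup_umbel_inequality_lemma}. Your observation about the stray exponent $p$ in the conclusion is also correct; the paper's own proof derives the bound without it.
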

        \begin{proof}
            By leveraging the definition of a $\epsilon$-diamond, we have
            \begin{align*}
                \frac{\inf_{i \neq j}d(x_i, x_j)^p}{C_D^p} &\leq (1 + \epsilon)^p\theta^p - \frac{d(s, t)^p}{2^p} \leq (1 + \epsilon)^p\theta^p - \theta^p,
            \end{align*}
            which implies
            \begin{align*}
                \inf_{i \neq j}d(x_i, x_j) \leq C_D\theta\bigp{(1 + \epsilon)^p - 1}^{1/p}.
            \end{align*}
            Now utilizing the same approximation argument as in the proof of the $\delta$-umbel lemma (\cref{lem:infrasup_umbel_inequality_lemma}), we have the result.
        \end{proof}
        Inequalities such as \eqref{def:infrasup_p_diamond_inequality} are studied in \cite{bicone_convexity_diamonds} where it is shown in particular that a reflexive and $p$-AUC Banach space satisfies \eqref{def:infrasup_p_diamond_inequality}.
        
        With this final piece completed, we can replace the $p$-AMUC version of the vertical $\epsilon$-diamond lemma with the metric version in the proof of the $p$-AMUC embeddability obstruction and observe that \eqref{def:infrasup_p_diamond_inequality} is preserved by rescalings of the metric.  This, then, gives us the following theorem. 

        \begin{theorem}[Metric Diamond Inequality Embedding Obstruction]
        \label{thm:infrasup_p_diamond_theorem}
            Let $(X, d)$ be a metric space satisfying \eqref{def:infrasup_p_diamond_inequality} for some $p \geq 1$ and $C_D > 0$. Then, we have for sufficiently large $n$, $D_n^\omega$ embeds into $X$ with distortion at least $Cn^{1/p}$ where $C$ is only dependent on $C_D$ and $p$.
        \end{theorem}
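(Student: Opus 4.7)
The plan is to replay the proof of \cref{thm:embedding_obstruction_diamonds} essentially verbatim, substituting the metric $\epsilon$-diamond lemma (\cref{lem:infrasup_p_diamond_delta_diamond}) for its $p$-AMUC counterpart (\cref{lem:delta_diamond_lemma}). The two observations that make this substitution legal are: (i) inequality \eqref{def:infrasup_p_diamond_inequality} is invariant under scalings of the metric with the same constant $C_D$, so a bi-Lipschitz embedding $f: D_n^\omega \to X$ with distortion $K$ may still be normalized to have lower Lipschitz constant $\lambda = 1$; and (ii) the construction of the vertical $\epsilon$-diamond in the proof of \cref{thm:embedding_obstruction_diamonds} never invokes any linear or Banach-space structure of $X$, using only the metric together with the purely combinatorial outputs of the coloring and generalized path lemmas.

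After normalizing, I would color $\tn{VP}(D_n^\omega)$ by the log-distortion bucket $\bfloor{\log_{1+\gamma}\bigp{d_X(f(x), f(y))/d_{D_n^\omega}(x, y)}}$ for a parameter $\gamma > 0$ to be fixed later, and apply the diamond coloring lemma (\cref{lem:diamond_coloring_lemma}) to extract a subgraph $\mathfrak{D} \subset D_n^\omega$ graph-isomorphic to $D_n^\omega$ on which each $2^i$-scaled copy of $D_1^\omega$ is horizontally monochromatic. Next, I would fix any $st$-path in $\mathfrak{D}$, identify it with $P_{2^n}$, and invoke the generalized path lemma (\cref{lem:general_path_lemma}) with the dyadic subsets $A_i = \braces{j \cdot 2^i : 0 \leq j \leq 2^{n - i}}$. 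Assuming $2^n \geq 2^{\ceil{2CK^p}}$ for a master constant $C$ to be chosen (otherwise the lower bound $K > (n/4C)^{1/p}$ is already in hand), this produces some scale $i$ and a triple $Z = \braces{s_{D_1^\omega}, x_1, t_{D_1^\omega}}$ in $\mathfrak{D}$ serving as the $s$-vertex, a midpoint, and the $t$-vertex of a $2^i$-scaled copy of $D_1^\omega$, whose pairwise images in $X$ are controlled within a factor of $1 + \frac{1}{2C L_{i+1}^p}$.

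Using the horizontal monochromaticity of this scaled $D_1^\omega$, one then lifts the estimates from the single midpoint $x_1$ to all the remaining midpoints $x_2, x_3, \ldots$ of the scaled diamond, incurring only a $(1 + \gamma)$ penalty per step. Setting $\gamma = 2/(C K^p) \leq 2/(C L_{i+1}^p)$, choosing $\theta = L_{i+1}(1 - \frac{1}{2C L_{i+1}^p})2^i/(1 + \gamma)$, and using $\frac{1+x}{1-x} \leq 1 + 4x$ and $(1+x)^3 \leq 1 + 7x$ on $[0, \tfrac{1}{2}]$ yields a vertical $\epsilon$-diamond $\braces{f(s_{D_1^\omega}), f(t_{D_1^\omega}), f(x_1), f(x_2), \ldots}$ with $\epsilon = O(1/L_{i+1}^p)$; this is precisely the claim inside the proof of \cref{thm:embedding_obstruction_diamonds} with no AMUC input.

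The final step, and the only genuine adjustment required, is to apply \cref{lem:infrasup_p_diamond_delta_diamond} to this vertical $\epsilon$-diamond. Since the $x_j$ lie in a $2^i$-scaled $D_1^\omega$, the embedding forces $\inf_{i \neq j} d_X(f(x_i), f(x_j)) \geq 2^{i+1}$, while the metric $\epsilon$-diamond lemma upper-bounds this same quantity by $6 C_D \theta \epsilon^{1/p} \lesssim 6 C_D \cdot 2^i L_{i+1} \cdot \epsilon^{1/p}$. Choosing the master constant $C$ as a suitable multiple of $(6 C_D)^p$ (the analogue of $C = 56 (64)^p / C_M$ in the AMUC proof) makes the right-hand side at most $2^i$, contradicting the lower bound. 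The only work is the routine constant bookkeeping; the structural argument is identical to \cref{thm:embedding_obstruction_diamonds}, and I expect no genuine obstacle beyond verifying that the scale-invariance of \eqref{def:infrasup_p_diamond_inequality} carries through the initial normalization step cleanly.
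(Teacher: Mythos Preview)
Your proposal is correct and matches the paper's own argument essentially verbatim: the paper proves \cref{thm:infrasup_p_diamond_theorem} precisely by observing that \eqref{def:infrasup_p_diamond_inequality} is preserved under rescalings of the metric and then replacing the $p$-AMUC $\epsilon$-diamond lemma (\cref{lem:delta_diamond_lemma}) with its metric analogue (\cref{lem:infrasup_p_diamond_delta_diamond}) in the proof of \cref{thm:embedding_obstruction_diamonds}, adjusting the master constant $C$ to a multiple of $(6C_D)^p$ exactly as you describe.
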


\section{Coarse Embeddings of Countably Branching Trees}
    In this section, we will work with the other side of tree embeddings, finding examples of embeddings into spaces.  Previously, we worked with bi-Lipschitz embeddings, but here, we will be dealing with \textit{coarse embeddings}.  This category of embeddings glosses over much of the local geometry of spaces and instead focuses on large scale geometric trends.  Rigorously, to define a coarse embedding, we must consider for two metric spaces $(X, d_X)$ and $(Y, d_Y)$ and a map $F: X \to Y$ the \textit{compression}, $\rho_F(t)$, of $F$ given by
    \[
        \rho_F(t) = \inf_{d_X(x, y) \geq t}d_Y(F(x), F(y)),
    \]
    and the \textit{expansion}, $\omega_F(t)$, given by
    \[
        \omega_F(t) = \sup_{d_X(x, y) \leq t}d_Y(F(x), F(y)).
    \]

    We say $F$ is a \textit{coarse embedding} if $\omega_F(t)$ is finite for all $t > 0$ and $\lim_{t \to \infty}\rho_F(t) = \infty$.  Similar to the way we defined equi-bi-Lipschitz embeddings, we can also define \textit{equi-coarse embeddings} for a family of metric spaces $\braces{X_i}_{i = 1}^\infty$ if there exists $\rho, \omega: [0, \infty) \to [0, \infty)$ and maps $f_i: X_i \to Y$ such that $\rho \leq \rho_{f_i}$ and $\omega_{f_i} \leq \omega$ with $\lim_{t \to \infty}\rho(t) = \infty$ and $\omega(t) < \infty$ for all $t > 0$.   Notice that in both definitions we have no indication about the Lipschitz-ness or continuity of $F$; coarse embeddings are more general than bi-Lipschitz embeddings.  These types of embeddings were first introduced by Gromov \cite{Gromov1993AsymptoticIO}.

    In \cite{tessera}, it was shown that any tree $T$ is coarsely embeddable into $\ell_p(T)$ for $1 \leq p < \infty$ by a map $F$ with compression bounded below by any function $f: [0, \infty) \to (0, \infty)$ such that $\int_1^\infty \frac{f(t)}{t}^p\frac{1}{t}dt < \infty$.  This result relied heavily on the properties of unit basis in $\ell_p$, namely that the unit basis is 1-suppression unconditional.  This condition can be weakened, however, at the expense of moving from coarse embeddings of any tree to equi-coarse embeddings of the countably branching trees of depth $k$.
    \begin{theorem}[Compression Theorem]
    \label{thm:compression_theorem}
        Let $\braces{\tree{k}}_{k = 1}^\infty$ be the countably branching trees of depth $k \in \n$ equipped with the standard tree metric.  Let $X$ be a Banach space with an $\ell^p$-asymptotic model for $1 \leq p < \infty$ generated by a weakly null array.  Then, for every increasing function $f: [0, \infty) \to [0, \infty)$ satisfying $\int_1^\infty (\frac{f(t)}{t})^p\frac{1}{t}dt < \infty$ and for every $k \in \n$, there exist $F_k: \tree{k} \to X$ with uniformly bounded Lipschitz norms such that $\rho_{F_k}(t) \succeq f(t/8)$ for all $t \geq 3$.
    \end{theorem}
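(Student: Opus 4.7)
The plan is to adapt Tessera's path embedding construction by replacing the unit basis of $\ell^p$ with vectors drawn carefully from the weakly null array generating the $\ell^p$-asymptotic model of $X$. The integrability condition on $f$ will dictate the choice of coefficients, while the asymptotic-model property replaces suppression unconditionality in Tessera's estimates.

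First, I would associate to each non-root vertex $\bar{v}$ of $\tree{k}$ a vector $x_{\bar{v}}\in X$ drawn from the generating array so that, along every root-to-leaf path $r<\bar{v}_1<\dots<\bar{v}_k$, the finite sequence $(x_{\bar{v}_1},\dots,x_{\bar{v}_k})$ is admissible for the $\ell^p$-asymptotic model, that is, $\|\sum c_i x_{\bar{v}_i}\|$ is within a factor arbitrarily close to $1$ of $(\sum|c_i|^p)^{1/p}$. Proceeding level by level and using weak nullity of each row of the array, at level $j$ I pick vectors from sufficiently far out in the $j$-th row, inductively refining the indices by a diagonal extraction so that the admissibility tolerance at level $j$ beats the (finite) combinatorial demand of all paths through levels $1,\dots,j$. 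Since $\tree{k}$ has finite height $k$, this extraction terminates.

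Second, I would choose weights $a_i$ driven by the integrability of $f$. After dyadic decomposition, $\int_1^\infty (f(t)/t)^p\,dt/t<\infty$ is equivalent to $\sum_{j\ge 0}(f(2^j)/2^j)^p<\infty$, which suggests setting $a_i$ approximately constant on dyadic blocks, say $a_i\asymp f(2^j)/2^j$ for $i\in[2^{j-1},2^j)$. Define
\[
    F_k(\bar{n}):=\sum_{i=1}^{|\bar{n}|}a_i\,x_{\bar{n}[1:i]}, \qquad F_k(r):=0.
\]
Since $\sum_i a_i^p<\infty$, the asymptotic-model upper bound applied to adjacent vertices gives a uniform Lipschitz bound $\|F_k(\bar{n})-F_k(\bar{m})\|\le a_{|\bar{n}|}(1+\epsilon)\lesssim 1$, independent of $k$.

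For the compression, if $\bar{u}$ is the last common ancestor of $\bar{m}$ and $\bar{n}$, then $F_k(\bar{m})-F_k(\bar{n})$ is a signed admissible combination supported on the $d_T(\bar{m},\bar{n})$ levels strictly above $|\bar{u}|$, so the asymptotic-model lower bound yields $\|F_k(\bar{m})-F_k(\bar{n})\|\ge (1-\epsilon)(\sum a_i^p)^{1/p}$ where the sum is over those levels. With the dyadic choice of $a_i$, this sum is bounded below by a constant multiple of $f(t/8)$ when $d_T(\bar{m},\bar{n})\ge t$, the factor $8$ absorbing the splitting between the two branches below $\bar{u}$ and the dyadic approximation of $f$.

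The main obstacle will be executing the vector selection so that admissibility holds simultaneously along every root-to-leaf path of a countably branching tree: because each vertex has infinitely many children, one cannot simply pass to a single subsequence, but must perform a careful inductive diagonal procedure assigning array indices to tree vertices. A secondary difficulty is the bookkeeping that transforms the integrability condition on $f$ into the explicit compression estimate $f(t/8)$, requiring careful tracking of the dyadic block widths and the two-sided structure of tree distances.
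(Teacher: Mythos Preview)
Your map $F_k(\bar n)=\sum_{i=1}^{|\bar n|}a_i\,x_{\bar n[1:i]}$ has a structural defect that breaks the compression estimate. With this definition the common-ancestor contributions do cancel, but the surviving branch terms carry coefficients $a_{s+1},\dots,a_{s+n}$ and $a_{s+1},\dots,a_{s+m}$, where $s=|\bar u|$ is the depth of the last common ancestor. Thus your lower bound is of order $\bigl(\sum_{i>s}a_i^p\bigr)^{1/p}$, which depends on $s$ and not merely on $d_T(\bar m,\bar n)=m+n$. Since $\sum_i a_i^p<\infty$, the tail $\sum_{i>s}a_i^p$ tends to $0$ as $s\to\infty$; for any fixed tree distance you can push the common ancestor arbitrarily deep (once $k$ is large) and drive the compression to zero. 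No uniform bound $\rho_{F_k}(t)\succeq f(t/8)$ is possible with this map. The paper avoids this by weighting each ancestor according to its distance from the \emph{current} vertex rather than from the root: it sets $F_k((n_1,\dots,n_l))=\sum_{i=0}^{l}\xi_{l-i}\,x^{i}_{\Phi((n_1,\dots,n_i))}$, so that the branch coefficients are always $\xi_{n-1},\dots,\xi_0$ and $\xi_{m-1},\dots,\xi_0$, independent of $s$. The price is that the Lipschitz check becomes nontrivial (neighbouring vertices now differ in every coefficient, and one needs $\sum_i|\xi_i-\xi_{i-1}|^p<\infty$), and the paper chooses $\xi_i-\xi_{i-1}=i^{-1/p}f(i)/i$ rather than your dyadic blocks.

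A second gap is the lower bound mechanism itself. The asymptotic-model inequality in \cref{def:ell_p_asymptotic_model} controls only sums of the form $\sum_i c_i x_{k_i}^{(i)}$ with strictly increasing $k_i$ and the row index matching the position; it says nothing about a difference of two branch sums that reuse the same rows $s+1,\dots,s+\max(m,n)$ with distinct column indices, nor about discarding the (non-cancelling, in the paper's map) common-ancestor block. The paper does not try to get this from admissibility alone; it invokes a separate lemma (\cref{lem:supp_uncond}) guaranteeing that, after passing to a suitable $\mathbb{M}$, every finite selection $(x_{l_j}^{(i_j)})_{j}$ with arbitrary (possibly repeated) rows $i_j$ and distinct columns $l_j\in\mathbb{M}$ is $(1+\epsilon)$-suppression unconditional. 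That is what lets one project onto a single branch and only then apply the $\ell^p$ estimate. Your diagonal-extraction sketch does not supply this, and without it the step ``asymptotic-model lower bound yields $\|F_k(\bar m)-F_k(\bar n)\|\ge(1-\epsilon)(\sum a_i^p)^{1/p}$'' is unjustified.
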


    Here, the condition that we are embedding into $\ell_p(T)$ has been replaced with a Banach space $X$ containing an $\ell_p$-asymptotic model, a generalization of $\ell_p$-spreading models that was first defined in \cite{Halbeisen2004}.  Various properties about $\ell_p$-asymptotic models were proven in \cite{supp_prop}, but here we will only give their definition and restate the relevant lemma  from \cite[Lemma 3.8]{supp_prop} needed for the theorem.

    \begin{definition}[$\ell_p$-Asymptotic Model]
    \label{def:ell_p_asymptotic_model}
    Given a Banach space $X$, a basic sequence $(e_i)_{i=1}^\infty$ is an $\ell_p$-\textit{asymptotic model} of $X$ for some $p \geq 1$ if there exists $\bigp{x_j^{(i)}: i, j \in \n} \subset S_X$, the unit sphere of $X$, and a null-sequence of scalars $(\epsilon_n)_{n = 1}^\infty \subset (0, 1)$ such that for all $n \in \n$ and all $\braces{a_i}_{i = 1}^n \subset [-1, 1]$ and $n \leq k_1 < ... < k_n$, then
    \[
        \babs{\bnorm{\sum_{i = 1}^na_ix_{k_i}^{(i)}} - \bigp{\sum_{i = 1}^n\abs{a_i}^p}^{1/p}} < \epsilon_n.
    \]
    \end{definition}

    \begin{lemma}[$(1 + \epsilon)$-suppression unconditional finite subsequences in $\ell_p$-asymptotic model]
    \label{lem:supp_uncond}
        Let $X$ be a Banach space and $(x_j^{(i)} : 1 \leq i \leq k, j \in \n)$ be a normalized weakly null array of height $k$.  Then for every $\epsilon > 0$ and $m \in \n$, there exists $\mathbb{M} \in \treeleaves[N]{\omega}$ so that for every $i_1, ..., i_m \in \braces{1, ..., k}$ (not necessarily distinct) and pairwise distinct $l_1, ..., l_m \in \mathbb{M}$, $(x_{l_j}^{(i_j)})_{j = 1}^m$ is $(1 + \epsilon)$-suppression unconditional.
    \end{lemma}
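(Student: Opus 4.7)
The plan is to stabilize all relevant partial-sum norms along an infinite $\mathbb{M} \subset \n$ via Ramsey's theorem, then use the weakly null hypothesis in a Brunel--Sucheston-style argument to show the stabilized ``asymptotic'' norms are $1$-suppression unconditional, and finally to transfer this back to the array with a controllable $\epsilon$-loss. Fix $\epsilon > 0$ and $m \in \n$, and pick $\epsilon' > 0$ small in terms of $\epsilon$ and $m$. By the scale invariance of suppression unconditional, it suffices to check the inequality on scalars $(a_j) \in [-1,1]^m$ with $\norm{(a_j)}_\infty = 1$; discretize this compact set to a finite $\delta$-net $\mathcal{N}$ with $\delta \ll \epsilon'$, so that $\delta$-perturbations change any partial-sum norm by at most $\epsilon'$ (using that each $x_l^{(i)}$ is normalized). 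This yields a finite family $\mathcal{F}$ of configurations $\sigma = (A, (i_j), (a_j))$ with $A \subset \braces{1,\ldots,m}$, $i_j \in \braces{1,\ldots,k}$, and $(a_j) \in \mathcal{N}$.

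For each $\sigma \in \mathcal{F}$, define $\Phi_\sigma(l_1 < \cdots < l_m) := \norm{\sum_{j \in A} a_j x_{l_j}^{(i_j)}} \in [0, m]$ on $[\n]^m$. Color $[\n]^m$ by an $\epsilon'$-grid partition of $[0, m]$ and apply Ramsey's theorem iteratively (once per $\sigma \in \mathcal{F}$) to produce an infinite $\mathbb{M} \subset \n$ on which every $\Phi_\sigma$ is $\epsilon'$-constant with stabilized value $L_\sigma$. For each $\sigma = (A, (i_j), (a_j))$ and its full-support extension $\sigma' = (\braces{1,\ldots,m}, (i_j), (a_j))$, the goal is $L_\sigma \leq L_{\sigma'} + O(\epsilon')$: for any $l \in [\mathbb{M}]^m$ chosen deep in the tail, pick a unit-norm Hahn--Banach functional $g \in \dual{X}$ norming $v := \sum_{j \in A} a_j x_{l_j}^{(i_j)}$, so that $\Phi_{\sigma'}(l) = \norm{v + w} \geq g(v) + g(w) = \norm{v} + g(w)$ with $w := \sum_{j \notin A} a_j x_{l_j}^{(i_j)}$; a further diagonal extraction of $\mathbb{M}$ using the weakly null property of each row $(x_j^{(i)})_j$ then forces $\abs{g(w)} \leq \epsilon'$ uniformly over the finite family $\mathcal{F}$.

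Combining the two estimates yields $\Phi_\sigma(l) \leq \Phi_{\sigma'}(l) + O(\epsilon')$ for all $l \in [\mathbb{M}]^m$. Under $\norm{(a_j)}_\infty = 1$ and the normalization of the array, $\Phi_{\sigma'}(l)$ is bounded below by a positive constant $c_0 = c_0(m)$, converting the additive error into a multiplicative factor $1 + O(\epsilon'/c_0)$. A small enough $\epsilon'$ delivers $(1 + \epsilon/2)$-suppression on the net $\mathcal{N}$, and the $\delta$-net reduction extends this to $(1+\epsilon)$-suppression for all scalars, as required.

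The main obstacle is the weakly null argument above when $A$ is not a prefix of $\braces{1,\ldots,m}$: the ordering constraint $l_1 < \cdots < l_m$ prevents sending a sandwiched index $l_j$ (with $j \notin A$) to infinity while keeping $(l_j)_{j \in A}$ fixed, so a plain ``take the tail sufficiently large'' argument does not directly apply. The resolution is a Brunel--Sucheston-style weak-$*$ Ramsey stabilization of the norming functionals: further extract $\mathbb{M}$ so that the norming functionals $g$ arising across the finite family $\mathcal{F}$ converge weak-$*$ to limits, and then use the fact that for each limiting functional, weakly null decay of every row bounds $\abs{g(x_l^{(i)})}$ uniformly for $l$ in the tail of $\mathbb{M}$, irrespective of whether the index is sandwiched within the $m$-tuple.
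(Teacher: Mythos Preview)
The paper does not prove this lemma: it is quoted as \cite[Lemma~3.8]{supp_prop} and used as a black box, so there is no in-paper proof to compare against.

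On the merits of your sketch: the architecture (discretize coefficients, Ramsey-stabilize every partial-sum norm over $[\n]^m$, then compare $L_\sigma$ to $L_{\sigma'}$ via weak nullity) is the standard one, and you correctly isolate the only real difficulty---the sandwiched-index problem. But your proposed resolution does not work. The norming functionals $g=g_{\sigma,l}$ are indexed by $\mathcal F$ \emph{and} by $l\in[\mathbb M]^m$; this is not a sequence one can pass to a weak-$*$ limit along. Even granting some extraction with $g_{\sigma,l}\overset{w^*}{\longrightarrow}g_\infty$, the conclusion $|g_\infty(x_n^{(i)})|\to 0$ (which is just weak nullity again) says nothing about $|g_{\sigma,l}(x_{l_{j_0}}^{(i_{j_0})})|$, because both the functional and the test vector move with $l$; weak-$*$ convergence gives pointwise control on fixed vectors, not uniform control on a moving target. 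The circularity you identified---$g$ depends on $(l_j)_{j>j_0}$, which must be chosen after $l_{j_0}$, which must be chosen after $g$---is simply not broken by passing to a limit. A smaller gap: the lower bound $\Phi_{\sigma'}(l)\ge c_0>0$ does not follow from normalization alone; it needs a preliminary Bessaga--Pe\l czy\'nski extraction making every such $m$-tuple $(1+\epsilon)$-basic.

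The argument in the cited reference avoids norming functionals on the full sum and uses Mazur's theorem instead. To suppress position $j_0$: fix $l_1<\dots<l_{j_0-1}$ in $\mathbb M$; since the row $(x_n^{(i_{j_0})})_n$ is weakly null, Mazur gives a convex combination $u=\sum_k\lambda_k x_{n_k}^{(i_{j_0})}$ with $\norm{u}<\delta$ and all $n_k>l_{j_0-1}$; \emph{only then} choose $l_{j_0+1}<\dots<l_m$ in $\mathbb M$ beyond $\max_k n_k$. With $w:=\sum_{j\ne j_0}a_jx_{l_j}^{(i_j)}$ one has
\[
\norm{w}\le\norm{w+a_{j_0}u}+|a_{j_0}|\delta\le\sum_k\lambda_k\,\bnorm{w+a_{j_0}x_{n_k}^{(i_{j_0})}}+|a_{j_0}|\delta,
\]
and each summand is, by the Ramsey stabilization of the full-length pattern, within $\epsilon'$ of $L_{\sigma'}$. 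This breaks the circularity because the convex combination is fixed \emph{before} the tail indices, and no functional on $w$ is ever needed.
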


    This lemma is of vital importance as it allows us to extract exactly the subsequence of the infinite array which has the suppression unconditionality that is necessary for the lower bound of our given embedding.  Below we present the proof of \cref{thm:compression_theorem}.
    
    \begin{proof}
        Following the example in \cite{baudier2021umbel}, let $k \in \n$ and fix a bijection $\Phi: \tree{k} \to \lbrace2k, 2k + 1, ...\rbrace$ such that $\Phi((n_1, n_2, ..., n_l)) \leq \Phi((n_1, n_2, ..., n_{l + 1}))$ for $l \leq k - 1$ and $\Phi(\emptyset) = r$ where $r$ is the root of the tree.  Let $S = \max_{i \in \n}\bigbrace{\bigbrack{\bigp{\frac{f(i)}{i}}^p\frac{1}{i}}, 1}$ which is finite by definition of $f$. Fix $\epsilon > 0$. By \cref{lem:supp_uncond}, for every $i_1, i_2, ..., i_{2k}$ in ${1, ..., k}$, not necessarily distinct, and any pairwise different $l_1, ..., l_{2k} \in \n$, we have that the sequence $(x_{l_j}^{i_j})_{j = 1}^{2k}$ is $(1 + \epsilon)$-suppression unconditional.    This enables us to assume, in addition, by taking a subarray if necessary, that for every $(a_j)_{1}^{2k} \in [-S, S]$, we have 
        \begin{align*}
            \babs{ \bnorm{\sum_{j=1}^{2k}a_jx_{l_j}^{i_j}} - \bigp{\sum_{j=1}^{2k}a_j^p}^{1/p} }  \leq \epsilon.
        \end{align*}
        
        With this in mind, we are now in position to define our map $F_k: \tree{k} \to X$ which we will show is Lipschitz and bounded below.  Let $(\xi_i)_{i=0}^\infty$ be a non-negative sequence such that $\sum_{i=1}^\infty \abs{\xi_{i} - \xi_{i - 1}}^p < \infty$ whose exact definition in terms of $f$ will be given later.  Let $F_k((n_1, ..., n_l)) = \sum_{i = 0}^{l}\xi_{l - i}x_{\Phi((n_1, ..., n_i))}^{i}$ where if $i = 0$, we have $\Phi(\emptyset) = r$.  Notice that since $\tree{k}$ is quasi-geodesic, we need only check that $F_k$ is Lipschitz on neighbor elements of the tree.  Let $x = (n_1, ..., n_l)$ and $y = (n_1, ...,n_l, n_{l + 1})$.  
        Then we see that 
        \begin{align*}
            \norm{F_k(x) - F_k(y)} &= \bnorm{\sum_{i = 0}^l\xi_{l - i}x^{i}_{\Phi((n_1, ..., n_i))} - \sum_{i = 0}^{l + 1}\xi_{l + 1 - i}x^{i}_{\Phi((n_1, ..., n_i))}} \\
            &= \bnorm{\sum_{i = 0}^{l}(\xi_{l + 1 - i} - \xi_{l - i})x^{i}_{\Phi((n_1, ..., n_i))} + \xi_0x_{\Phi((n_1, .., n_{l + 1}))}^{l + 1}} \\
            &\leq \bigp{\abs{\xi_0}^p + \sum_{i = 0}^l\abs{\xi_{l + 1 - i} - \xi_{l - i}}^p}^{1/p} + \epsilon \\
            &= \bigp{\abs{\xi_0}^p + \sum_{i = 1}^{l+1}\abs{\xi_{i} - \xi_{i - 1}}^p}^{1/p} + \epsilon
        \end{align*}
        where the second to last step uses the $\ell^p$-asymptotic property of the sequence.  Taking the limit as $l$ goes to infinity, we see that the boundedness property guarantees that this map is Lipschitz regardless of the depth of the tree chosen, i.e., there exists an upper bound for the Lipschitz constant which is completely independent of $k$.
        
        From here, we need to prove lower boundedness.  Let $x = (\Bar{u}, x_1, ..., x_n)$ and let $y = (\Bar{u}, y_1, y_2, ..., y_m)$ be elements of $\tree{k}$ such that $\Bar{u}$ is the  first element both $x$ and $y$ have in common on their paths back to the root $r$.  This element may be the root itself, but we only need to have that if $\Bar{u} = (u_1, ..., u_s)$, then $\max(s + n, s + m) \leq k$ so as to stay in $\tree{k}$.  With this in mind, we observe
        \begin{align*}
            \norm{F_k(x) - F_k(y)} =  \bnorm{&\sum_{i = 0}^s(\xi_{s + n - i} - \xi_{s + m - i})x_{\Phi((u_1, ..., u_i))}^{i} + \sum_{i = 1}^n\xi_{n - i}x_{\Phi((\Bar{u}, x_1, ..., x_i))}^{s + i} - \\
            & \sum_{i = 1}^m\xi_{m - i}x_{\Phi((\Bar{u}, y_1, ..., y_i))}^{s + i}}  \\
            &\geq \frac{1}{1 + \epsilon}\bnorm{\sum_{i = 1}^n\xi_{n - i}x_{\Phi((\Bar{u}, x_1, ..., x_i))}^{s + i}} \\
            &\geq \frac{1}{1 + \epsilon}\bigp{\sum_{i = 1}^n(\xi_{n - i})^p}^{1/p} - \frac{\epsilon}{1 + \epsilon} \\
            &= \frac{1}{1 + \epsilon}\bigp{\sum_{i = 0}^{n - 1}(\xi_{i})^p}^{1/p} - \frac{\epsilon}{1 + \epsilon}
        \end{align*}
        Similarly, we can apply this suppression and asymptotic model argument to the last third of the sum and for $\epsilon \leq 1/2$,  get the combined lower bound given by
        \[
            \norm{F_k(x) - F_k(y)} \geq \frac{1}{3}\bigbrack{\bigp{\sum_{i = 0}^{n - 1}(\xi_{i})^p}^{1/p} + \bigp{\sum_{i = 0}^{m - 1}(\xi_{i})^p}^{1/p}} - \frac{\epsilon}{1 + \epsilon}.
        \]
        
        Notice that this is a bound for the compression in terms of the $\ell^p$-sums of the $(\xi_i)$ sequence, i.e., $\rho_{F_k}(m + n) \geq \Omega_\epsilon((\sum_{i = 0}^{M - 1}\xi_i^p)^{1/p})$ where $M = \max(m, n)$.  Ultimately, we will desire a lower bound in terms of $d(x, y)$, but converting from $M$ to $d(x, y)$ can be done by observing that $\max(m, n) \geq d(x, y) / 2$. 

        From here, we can now demonstrate the dependency on $f$.  Notice that up until this point, the only requirement for the $\xi_i$'s has been that the $\ell^p$-sum of their sequential differences is finite.  If we choose $\xi_{0} = f(0)$ and  $\xi_{i} - \xi_{i - 1} = \frac{1}{i^{1/p}}\frac{f(i)}{i}$ for $i \geq 1$, then we can first observe that the properties of $f$ guarantee that $\sum_{i = 0}^\infty\abs{\xi_{i} - \xi_{i - 1}}^p < \infty$, if $\xi_{-1}$ is defined as zero.  Next, for the technical portion of this proof, we will need some kind of rounding function to ensure integer indices in our summations, since we will be frequently dividing these integers in half.  As a result, if $A = \braces{\frac{m}{2} \ | \ m \in \z}$, then our rounding function, $R: \z \cup A \to \z$, is defined as $R(x) = x$ if $x \in \z$ and $R(x) = x + 1/2$ if $x \in A$, essentially rounding up whenever $x$ is a fraction.  Finally, since we are only searching for a bound on the distances greater than or equal to three, we have $M \geq 2$.  This is important as it prevents the quantity $M - R(M / 2)$ from zeroing out and trivializing our lower bound. 
        
        We now follow the example of \cite{tessera} for these calculations, 
        \begin{align*}
            \norm{F_k(x) - F_k(y)} &\geq \frac{1}{3}\bigp{\sum_{i = 0}^{M - 1}\xi_i^p}^{1/p} - \frac{\epsilon}{1 + \epsilon} \\
            &\geq \frac{1}{3}\bigbrack{\sum_{i = R(M/2)}^{M - 1}\bigp{\sum_{j = 0}^{i}\xi_{j} - \xi_{j - 1}}^{p}}^{1/p} - \frac{\epsilon}{1 + \epsilon} \\
            &\geq \frac{1}{3}\bigbrack{\frac{M - 1 - R(M/2) + 1}{2}\bigp{\sum_{j = 0}^{R(M/2)}\xi_{j} - \xi_{j - 1}}^{p}}^{1/p} - \frac{\epsilon}{1 + \epsilon} \\
            &\geq \frac{1}{3}\bigp{\frac{M - R(M/2)}{2}}^{1/p}\bigp{\sum_{j = 0}^{R(M/2)}\xi_{j} - \xi_{j - 1}} - \frac{\epsilon}{1 + \epsilon} \\
            &\geq \frac{1}{3(4)^{1/p}}\bigp{M - 1}^{1/p}\bigp{\sum_{j = R(R(M/2)/2)}^{R(M/2)}\frac{1}{j^{1/p}}\frac{f(j)}{j}} - \frac{\epsilon}{1 + \epsilon} \\
            &\geq \frac{f(M/4)}{3(4)^{1/p}}\bigp{M - 1}^{1/p}\bigp{\sum_{j = R(R(M/2)/2)}^{R(M/2)}\frac{1}{j^{\frac{1}{p} + 1}}} - \frac{\epsilon}{1 + \epsilon}.
        \end{align*}
    
        From here, let us process the sum separately from the rest of the chain of inequalities, then substitute it back in later.  This gives us
            \begin{align*}
                \sum_{j = R(R(M/2)/2)}^{R(M/2)}\frac{1}{j^{\frac{1}{p} + 1}} &\geq \bigp{\frac{R(M/2) - R(R(M/2)/2) + 1}{2}}\bigp{\frac{1}{(R(M/2))^{\frac{1}{p} + 1}}} \\
                &\geq \bigp{\frac{R(M/2) + 1}{4}}\frac{1}{\bigp{R(M/2) }^{\frac{1}{p} + 1}} \\
                &\geq \frac{1}{4(R(M/2))^{\frac{1}{p}}} \\
                &\geq \frac{1}{4(\frac{M + 1}{2})^{\frac{1}{p}}} \\
                &= \frac{2^{\frac{1}{p}}}{4(M + 1)^{\frac{1}{p}}}
            \end{align*}
        Finally, combining these and applying that $M \geq 2$ to see that $\frac{M - 1}{M + 1} \geq \frac{1}{3}$, we have
        \begin{align*}
            \frac{f(M/4)}{12(2)^{\frac{1}{p}}}\bigp{\frac{M - 1}{M + 1}}^{1/p} - \frac{\epsilon}{1 + \epsilon} &\geq \frac{1}{12(6)^{1/p}}f(M/4) - \frac{\epsilon}{1 + \epsilon} \\
            &\geq \frac{1}{12(6)^{1/p}}f(d(x, y) / 8) - \frac{\epsilon}{1 + \epsilon}.
        \end{align*}
        This enables us to bound $\rho_{F_k}$ below by $\frac{1}{12(6)^{1/p}}f(\frac{d(x, y)}{8}) - \frac{\epsilon}{1 + \epsilon}$ where $\epsilon$ has been chosen sufficiently small.
    \end{proof}

\printbibliography

\end{document}